\title{An entropic interpolation problem for incompressible viscid fluids}
\author[Arnaudon]{Marc Arnaudon}
\author[Cruzeiro]{Ana Bela Cruzeiro}
\author[Léonard]{Christian Léonard}
\author[Zambrini]{Jean-Claude Zambrini}
\date{April 4, 2017}
\newtheorem{theorem}{Theorem}
\newtheorem{lemma}[theorem]{Lemma}
\newtheorem{proposition}[theorem]{Proposition}
\newtheorem{corollary}[theorem]{Corollary}
\newtheorem{definition}[theorem]{Definition}
\newtheorem{definitions}[theorem]{Definitions}
\newtheorem{assumption}[theorem]{Assumption}
\newtheorem{hypotheses}[theorem]{Hypotheses}
\theoremstyle{remark}
\newtheorem{remark}[theorem]{Remark}
\newtheorem{remarks}[theorem]{Remarks}
\numberwithin{theorem}{section}
\newcommand{\RR}{\mathbb{R}}
\newcommand{\Rn}{\mathbb{R}^n}
\newcommand{\1}{\mathbf{1}}
\newcommand{\ttimes}{\!\times\!}
\newcommand\pf{_{\#}}
\newcommand{\vol}{\mathrm{vol}}
\newcommand{\as}{\textrm{-a.s.}}
\renewcommand{\ae}{\textrm{-a.e.}}
\newcommand{\Id}{\mathrm{Id}}
\newcommand{\scal}{\!\cdot\!}
    \DeclareMathOperator{\supp}{supp}
\newcommand{\lsc}{lower semicontinuous}
\newcommand{\ud}{\frac{1}{2}}
\newcommand\XX{\mathcal{X}}
\newcommand\XXX{\XX^2}
\newcommand\PX{\mathrm{P}(\XX)}
\newcommand\PXX{\mathrm{P}(\XXX)}
\newcommand\MX{\mathrm{M}(\XX)}
\newcommand\PO{\mathrm{P}(\Omega)}
\newcommand\MO{\mathrm{M}(\Omega)}
\newcommand\OO{\Omega}
\newcommand\BO{\mathrm{B}(\Omega)}
\newcommand\ii{{[0,1]}}
\newcommand\iX{{[0,1]\ttimes\XX}}
\newcommand\TX{{ \mathcal{T}\ttimes\XX}}
\newcommand\IX{\int_{\XX}}
\newcommand\IXX{\int_{\XXX}}
\newcommand\Iii{\int_\ii}
\newcommand\IT{\int _{ \mathcal{T}}}
\newcommand\IiX{\int_\iX}
\newcommand\ITX{\int_\TX}
\newcommand{\IRn}{\int _{\Rn}}
\newcommand{\BXX}{B(\XXX)}
\newcommand{\BTX}{B(\TX)}
\newcommand{\ma}{ \mu ^{ \alpha}}
\newcommand{\vf}{\overset{\rightharpoonup}{v}}
\newcommand{\vb}{\overset{\leftharpoonup}{v}}
\newcommand\vvf[2]{\overset{#1\,\rightarrowtail}{v _{ #2}}}
\newcommand\vvb[2]{\overset{\leftarrowtail\, #1}{v _{ #2}}}
\newcommand{\Py}{\overset{\leftarrow\, y}P}
\newcommand{\grad}{\nabla}
\newcommand{\Tn}{ \mathbb{T}^n}
\newcommand{\TT}{ \mathcal{T}}
\renewcommand{\SS}{ \mathcal{S}}
\newcommand{\PT}{\mathrm{P}(\TT)}
  \keywords{Incompressible viscid fluids, entropy minimization, diffusion processes, convex duality, stochastic velocities, Navier-Stokes equation}
 \address{Institut de mathématiques de Bordeaux, Université de Bordeaux, 33405 Talence Cedex, France}
 \email{marc.arnaudon@math.u-bordeaux.fr}
\address{GFMUL and Dep.\,de Mat. Instituto Superior Técnico, Av.\ Rovisco Pais, 1049-001 Lisboa, Portugal}
\email{abcruz@math.tecnico.ulisboa.pt}
\address{Modal'X, UPL, Univ Paris Nanterre, F92000 Nanterre France}
\email{christian.leonard@u-paris10.fr}
\address{GFMUL and Dep.\,de Mat. Faculty of Sciences.
Campo Grande, Edifício C6. PT-1749-016 Lisboa. Portugal}
\email{jczambrini@fc.ul.pt }
\begin{document}

\maketitle

\begin{abstract} 
In view of studying incompressible inviscid fluids,  Brenier introduced in the late 80's a relaxation of a geodesic problem addressed by Arnold in 1966. Instead of \emph{inviscid} fluids, the present paper is devoted to incompressible \emph{viscid} fluids. A natural analogue of Brenier's  problem is introduced,  where  generalized flows are no more supported  by absolutely continuous paths, but by Brownian sample paths. It turns out that this new variational problem is an entropy minimization problem with marginal constraints entering the class of convex minimization problems.

This paper explores the connection between this variational problem and Brenier's  original problem. Its dual problem is derived and the general shape of its solution  is described. Under the restrictive assumption that the pressure is a nice function, the kinematics of its solution is made explicit and its connection with the Navier-Stokes equation is established.

\end{abstract}

\tableofcontents

\section*{Introduction}

In the article \cite{Arn66}, Arnold addressed a geodesic problem on the manifold of  all volume preserving diffeomorphisms on the torus. The resulting geodesics offer us a natural description of the evolution of an incompressible perfect fluid.  Unfortunately, very little is known on the global existence of these geodesics \cite{EM70} and Shnirelman proved in \cite{Shn85,Shn94} that solutions do not exist in general. In the seminal article \cite{Bre89}, Brenier introduced a relaxation of Arnold's problem in terms of generalized flows. In this extended setting, global existence of generalized ``minimizing flows'' is much easier to obtain. 

While both Arnold's and Brenier's problems are related to the evolution of  \emph{inviscid} fluids, usually described by the Euler equation \eqref{eq-05}, the present article introduces a stochastic analogue of Brenier's relaxed problem related to the description of the evolution of \emph{viscid} fluids. This viscosity is usually modeled by adding an extra Laplacian term to the Euler equation, leading to the Navier-Stokes equation \eqref{eqac-04}. Following Itô's   stochastic description of parabolic diffusion equations, we shall introduce stochastic differential equations based on Brownian motion to take the viscosity into account. 

Brenier's problem \eqref{eq-08} amounts to minimize an average kinetic action, the averaging procedure being performed over the set of all absolutely continuous sample paths. This is no longer available when the sample paths are  nowhere differentiable Brownian trajectories. However, it is still possible to consider some kinetic action in terms of Nelson's stochastic velocities. It happens that this stochastic action is a relative entropy (with respect to the Wiener measure) and that Brenier's minimization of an average kinetic action turns out to be an entropy minimization problem: the Bredinger  problem \eqref{eq-bdg} stated below at page \pageref{eq-bdg}.

\subsection*{Literature}

Brenier's relaxation of Arnold's geodesic problem was introduced in  \cite{Bre89b}. Its dual problem was established and investigated for the first time in  \cite{Bre93} where it was emphasized  that the pressure field is the natural Lagrange multiplier of the incompressibility condition. 
The connection between solutions to Brenier's problem and the notion of measure-valued solutions to the Euler equation in the sense of DiPerna and Majda was  clarified in  \cite{Bre99}. 
The regularity of the pressure field was explored in   \cite{Bre93} and revisited in   \cite{Bre99}. Further improvements about the dual problem and the regularity of the pressure were obtained later by Ambrosio and Figalli in \cite{AF08,AF09}.

Considering viscid fluids (Navier-Stokes equation) instead of inviscid ones, we refer to \cite{AAC14,ACF16} for  works  where generalized flows are also considered. The present article is also about generalized flows in the setting of viscid fluids but with an alternate point of view. To our knowledge, it is the first attempt to extend Brenier's \emph{variational approach}  in this context.

\subsection*{Outline of the paper}

More about the connection between Bredinger and Brenier problems and their relations with fluid dynamics is given at Section \ref{sec-bre}. Since Bredinger's problem enters the class of convex minimization problems, it admits a natural dual problem; this is exposed at Section \ref{sec-dual}. It is the place where the pressure enters the game. The general shape of the Radon-Nikodym density of the solution of Bredinger's problem with respect to the Wiener measure is described at Section \ref{sec-shape}. It is shown that, in general, this solution fails to be Markov but  remains reciprocal. At Section \ref{sec-regular}, under the restrictive assumption that the pressure is a nice function, the kinematics of the solution is made explicit. This permits us to establish a connection between Bredinger's problem and the Navier-Stokes equation. Finally, the last Section \ref{sec-existence} is devoted to a characterization of the existence of a solution to Bredinger problem when the state space is the torus $\Tn$. 

In the present article,  the difficult problem of the regularity of  the pressure field is left apart, see Remark \ref{rem-01}(b).

\subsection*{Notation}
 The sets  of all Borel probability measures  on a topological set $\mathcal{Z}$ is denoted by $\mathrm{P}(\mathcal{Z}).$ 
For any probability measure $m\in \mathrm{P}(A)$ on the Borel set $A$, the push-forward of $m$ by the Borel measurable mapping $\theta:A\to B$ is denoted by $\theta\pf m\in \mathrm{P}(B)$ and  defined by $\theta\pf m(db):=m( \theta ^{-1}(db)),$ for any Borel subset $db\subset B.$

\subsubsection*{State space}

In general the state space $\XX$ for the fluid will be either the flat torus $\XX= \mathbb{T}^n=\Rn/\ii ^n$ or the whole space $\XX=\Rn.$ We denote it by $\XX$ when its specific structure does not matter.
 
\subsubsection*{Path space}

We denote  $\OO:=C(\ii,\XX)$ the set of all continuous paths  from the unit time interval $\ii$ to  $\XX$.
As usual, the canonical process is defined by
\begin{equation*}
X_t(\omega)=\omega_t\in\XX,\quad t\in\ii, \ \omega=(\omega_s)_{0\le s\le1}\in\OO
\end{equation*}
and $\OO$ is equipped with the canonical $\sigma$-field $\sigma(X_t; 0\le t\le 1)$. 

\subsubsection*{Marginal measures}

For any $Q\in\PO$ and $0\le s, t\le 1,$ we denote $Q_t=(X_t)\pf Q\in \PX$ and $Q _{st}=(X_s,X_t)\pf Q\in\PXX;$ they are respectively the laws of the position $X_t$ at time $t$ and of the couple of positions $(X_s,X_t)$ when the law of the whole random trajectory is $Q.$ In particular, taking $s=0$ and $t=1,$ $Q _{01}:=(X_0,X_1)\pf Q\in\PXX$ is the endpoint projection of $Q$ onto $\XXX.$ If $Q$ describes the random behavior of a particle, then $Q_t$ and $Q _{01}$ describe respectively the random behaviors of the particle at time $t$ and  the couple of endpoint positions $(X_0,X_1).$ We denote $Q^x:=Q(\cdot\mid X_0=x)\in\PO$ and the bridge of $Q$ between $x$ and $y\in\XX$ is  $Q ^{xy}(\cdot):=Q(\cdot\mid X_0=x,X_1=y)\in\PO.$ In particular, as $\XX$ is  Polish, the following disintegration formulas
\begin{equation*}
Q(\cdot)=\IX Q^x(\cdot)\, Q_0(dx)=\IXX Q ^{xy}(\cdot)\, Q _{01}(dxdy)\in\PO
\end{equation*}
are meaningful, i.e.\ $x\in\XX\mapsto Q^x\in\PO$ and $(x,y)\in\XXX\mapsto Q ^{xy}\in\PO$ are measurable kernels.

\subsection*{Relative entropy}

Let $ \mathcal{Y}$ be a measurable space. We denote $ \mathrm{P}( \mathcal{Y})$ and $ \mathrm{M}( \mathcal{Y})$ respectively the sets of probability and positive measures on $ \mathcal{Y}.$
The relative entropy of $ \textsf{q}\in \mathrm{M}(\mathcal{Y})$ with respect to the reference  measure $ \textsf{r}\in \mathrm{M}(\mathcal{Y})$ is defined by
\begin{equation*}
H( \textsf{q}| \textsf{r}):=\int _{\mathcal{Y}}\log \left(\frac{d \textsf{q}}{d \textsf{r}}\right)\,d \textsf{q}\in (-\infty,\infty]
\end{equation*}
whenever the integral is meaningful, i.e.\ when $ \textsf{q}$ is absolutely continuous with respect to $ \textsf{r}$ and $\int _{\mathcal{Y}}\log_- \left(d \textsf{q}/d \textsf{r}\right)\,d \textsf{q}<\infty.$
\\
When $ \textsf{r}\in \mathrm{P}( \mathcal{Y})$ is a probability measure,
for all probability measures $\textsf{q}\in \mathrm{P}( \mathcal{Y}),$ we have
\begin{align}\label{eq-01}
H(\textsf{q}|\textsf{r})=\min H(\cdot|\textsf{r})=0 \iff \textsf{q}=\textsf{r}.
\end{align}
A frequent use will be made  of the additive  decomposition formula
\begin{align}\label{eq-02}
H(\textsf{q}|\textsf{r})=H( f\pf \textsf{q}| f\pf \textsf{r})+\int _{ \mathcal{Z}} H( \textsf{q} ^{ f=z}| \textsf{r} ^{ f=z})\, (f\pf \textsf{q})(dz)
\end{align}
where $f: \mathcal{Y}\to \mathcal{Z}$ is any  measurable mapping between the Polish spaces $ \mathcal{Y}$ and $ \mathcal{Z}$ equipped with their Borel $ \sigma$-fields and $\textsf{q} ^{ f=z}$ is a regular version of the conditioned probability measure $\textsf{q}(\cdot\mid f=z).$ 

\begin{itemize}
\item
Note that as a definition $ \textsf{q} ^{ f=z}$ is always  a probability measure, even when $ \textsf{q}$ is not.
\item
It is necessary that $f\pf \textsf{q}$ is $ \sigma$-finite for the conditional probability measure $ \textsf{q} ^{ f}$ to be defined properly.
\end{itemize}
In particular, we see with \eqref{eq-01} and \eqref{eq-02} that 
\begin{align}\label{eq-03}
H(f\pf \textsf{p}|f\pf \textsf{r})\le H( \textsf{q}| \textsf{r}),
\end{align}
expressing the well-known property of decrease of the entropy by measurable push-forward.
\\
Taking $f=X_0:\OO\to \XX$ in \eqref{eq-02} gives, for any $Q,R\in\MO,$
\begin{align*}
H(Q|R)=H(Q_0|R_0)
	+\IX H(Q^x|R^x)\, Q_0(dx)
\end{align*}
whenever $Q_0$ and $R_0$ are $ \sigma$-finite and $H(Q_0|R_0)$ makes sense in $( - \infty, \infty].$
An interesting situation where unbounded path measures arise naturally is when the initial marginals $$Q_0=R_0=\vol$$ are prescribed to be the volume measure on $\XX=\Rn.$ In this case,
\begin{align*}
H(Q|R)=\IX H(Q^x|R^x)\, \vol(dx).
\end{align*}

\section{Bredinger's problem}\label{sec-bre}

The main role of  this article is played by the Bredinger problem, an entropy minimization problem stated below at \eqref{eq-bdg}. The present section is devoted to a brief exposition of some relations between Bredinger's problem and the evolution of an incompressible \emph{viscid} fluid. As our approach follows Brenier's one, we start with Brenier's problem and its relation with the evolution of an incompressible \emph{inviscid} fluid.

\subsection*{Eulerian and Lagrangian coordinates}
These coordinates correspond to two different descriptions of the same fluid flow through space and time. Let the state space $\XX$ be a subset of $\Rn.$
\begin{enumerate}
\item
The \emph{Eulerian} specification of the flow field in   $\XX$ is a vector field
$$
 (t,x)\in \ii\times \XX\mapsto   {v}\left(t,x\right)\in \Rn
$$
giving the  velocity at position $x$ and time $t$.
One  looks at the fluid motion  focusing on specific locations in  $\XX$. 
 
\item
The \emph{Lagrangian} specification of the flow field is a function
$$
 (t,x)\in\ii\times \XX \mapsto   q(t,x)\in \XX
$$
giving the position at time $t$ of the parcel which was located at $x$ at time $t=0$. One looks at fluid motion  following an individual particle. 
The labeling of the fluid particles  allows  keeping track of the  changes of the shape of fluid parcels over time.  
\end{enumerate}
The two specifications are related by:
$
    {v}\left(t,q(t,x)\right) = \partial_t q(t,x).
$
The total rate of change of a function or a vector field $F(t,z)$ experienced by a specific flow parcel is $${\mathrm{D}_t{F}}(t,z) =  \frac{d}{dt}F(t,q(t,x)){} _{\big| x=q_t^{-1}(z)}$$ where $z$ is fixed.
This  gives
$	
 {\mathrm{D}_t{F}} = ({\partial_t } + {v}\scal \nabla) {F},
$	
since 
\begin{align*}
\frac{d}{dt}F(t,q(t,x)){} _{\big| x=q_t^{-1}(z)}&= \partial_tF(t,q(t,x)){} _{\big| x=q_t^{-1}(z)}+ \partial_tq(t,x)\scal\nabla F(t,q(t,x)){} _{\big| x=q_t^{-1}(z)}\\
	&= \partial_tF(t,z)+v(t,z)\scal\nabla F(t,z).
\end{align*}
This formula is meaningful if for each $t,$ the map $x\mapsto q(t,x)$ is injective.
 The operator
\begin{align}\label{eq-04}
\mathrm{D}_t={\partial_t } + {v}\scal \nabla
\end{align}
is sometimes called the convective derivative.

\subsection*{Euler equation}
Let $\XX$ be a bounded domain of $\Rn.$
A fluid in $\XX$ is said to be incompressible if the volume is preserved along the flow. This is equivalent to
\begin{equation*}
\nabla\scal v=0,
\end{equation*}
that is the divergence of the velocity field $v$ vanishes everywhere. If the domain $\XX$ has a boundary $ \partial\XX,$ the  impermeability condition is
$$
n\scal v=0
$$ 
where $n$ is a normal vector to $ \partial\XX. $ From now on, we shall restrict our attention to domains $\XX$ without boundary so that the impermeability condition  is dropped down.
\\
 The Euler equation is Newton's equation of motion
\begin{equation*}
 \mathrm{D}_t v= - \nabla p
\end{equation*}
where $ \mathrm{D}_t v$ is the convective acceleration and  the scalar  pressure field  $p:\ii\times \XX\to\RR$ is part of the solution to be found out with $v$. The force $-\nabla p$ is necessary  for the volume to be preserved as time evolves. The fluid moves from high pressure to low pressure areas. 
 Because of the expression \eqref{eq-04} of the convective derivative, we obtain
 \begin{equation}\label{eq-05}
\left\{\begin{array}{ll}
{\partial_t {v}} + {v}\scal \nabla {v}+ \nabla p=0, & t\ge0, x\in\XX\\
\nabla\scal v=0, & t\ge 0, x\in \XX\\
v(0,\cdot)=v_0, &t=0
\end{array}\right.
\end{equation}
which is the Euler equation of the unknown $(v,p)$ for an incompressible fluid seen as a Cauchy problem with a given initial velocity field $v_0.$

\subsection*{Arnold's flow of diffeomorphisms}

The Cauchy problem \eqref{eq-05} is notoriously difficult  and there  is some hope to understand it a little further by considering a variant  which is closer to a variational approach of  classical mechanics.
Arnold \cite{Arn66} proposed to look at the following fixed  endpoint version of the Cauchy problem \eqref{eq-05}:
\begin{equation}\label{eq-06}
\left\{ \begin{array}{ll}
\partial_t v+v\scal\nabla v+\nabla p=0,& 0\le t\le 1,\\
\nabla\scal v=0,& 0\le t\le 1,\\
q_1[v]= h, &
\end{array}\right.
\end{equation}
where  $\XX$ is  a compact manifold with  no boundary, typically $\XX=\Tn,$
\begin{itemize}
\item
$q_1[v]$ is defined by $q_1[v](x):= \omega_1^x,\ x\in\XX,$  with  
 $ \left\{ \begin{array}{ll}
\dot \omega_t^x=v(t ,\omega_t^x),& 0\le t\le 1\\
\omega_0^x=x, &t=0
\end{array}\right.$;
\item
$h$ belongs to the group $G _{ \vol}:= \left\{ g \textrm{ diffeo: } \mathrm{det\, D}g=1\right\}$ of all volume and orientation preserving diffeomorphisms of $\XX$.
\end{itemize}
This should be regarded informally since it is implicitly assumed in the definition of $q_1[v]$ that the field $v$ admits a unique integral curve for each starting point $x$. 
In fact, the exact  purpose of  \cite{Arn66} is to describe the fluid evolution by means of   pathlines $(q_t(\cdot)) _{ 0\le t\le 1}$ which are seen as trajectories in  $G _{ \vol}$. One can prove that any solution $(q_t(\cdot))_{0\le t\le 1}$ of the  action minimizing  problem
\begin{equation}\label{eq-07}
\IiX  |\partial_t q_t(x)|^2\, dxdt\to \textrm{min}:\quad q_t(\cdot)\in G _{ \vol},\forall 0\le t\le 1,\quad q_0(\cdot)=\Id,\ q_1(\cdot)=h
\end{equation}
 where $h$ is a prescribed element of $G _{ \vol}$, is such that the velocity field $v(t,z)=\partial_t q_t(q_t^{-1}(z))$  is a solution of \eqref{eq-06} for some pressure field $p$. This minimizer is nothing but a geodesic flow on $G _{ \vol}$ with prescribed endpoint positions $\Id$ and $h$. The pressure  $p$ disappears from the picture since  $-\nabla p$ can be seen as the force necessary to maintain the motion on the manifold  $G$ of all diffeomorphisms inside the submanifold $G _{ \vol}$ of volume preserving diffeomorphisms (the orientation is automatically preserved by continuity of the motion).

\subsection*{Brenier's generalized flow}

Solving the geodesic problem \eqref{eq-07} in $ G _{ \vol}$ remains  difficult. Indeed, the only known attempt  is done in \cite{EM70} where a solution is proved to exist for $h$ very close to the identity. Actually there are examples where such geodesics do not exist, see \cite{Shn85,Shn94}. Therefore, Brenier \cite{Bre89}  relaxed \eqref{eq-07} by introducing  a probabilistic representation.
 Brenier's problem consists of minimizing an average kinetic action subject to incompressibility and  endpoint constraints. It is
\begin{equation}\label{eq-08}
E_Q \Iii |\dot X_t|^2\, dt\to \textrm{min};\qquad Q\in\PO: [Q_t=\vol,\forall 0\le t\le1],\  Q _{01}=\pi
\end{equation}
where   $\pi\in\PXX$ is a prescribed bistochastic probability measure, i.e.\ its marginals satisfy  $\pi_0=\pi_1=\vol$ and $\dot X_t( \omega)=\dot \omega_t$ for any absolutely continuous path $ \omega\in\OO$ with generalized time derivative $\dot \omega.$ In the above action functional, it is understood that $\Iii |\dot \omega_t|^2\, dt=\infty$ whenever $\omega\in\OO$ is not absolutely continuous. Therefore   any solution $P$ of \eqref{eq-08} is a path measure charging absolutely continuous paths.  The constraint $(Q_t=\vol,\forall 0\le t\le1)$ reflects the volume preservation.  The prescription that $Q _{01}=\pi$ {  varies among all the possible correlation structures between the initial and final positions with average profile $Q_0=Q_1=\vol.$}  It is a relaxation of  $q_1(\cdot)=h$ as can be seen by taking $\pi(dxdy)=\pi^h(dxdy):=\vol(dx) \delta _{h(x)}(dy).$

 It is proved in \cite{Bre89} that  any $P\in \PO $ such that 
$ \left\{ \begin{array}{l}
P_t=\vol,\forall t \textrm{ and } P _{ 01}=\pi\\
\ddot X_t+\nabla p(t,X_t)=0,\ \forall t,\ P \ae
\end{array}\right.,
$
 for some pressure field $p$, solves the geodesic problem \eqref{eq-08}. Keeping Arnold's point of view, we see that the $\PXX$-valued flow $(P _{ 0t}) _{ 0\le t\le1}$ is the generalized solution of Arnold's geodesic equation \eqref{eq-07}.
In this approach one can recover the velocity field by defining a probability measure $\sigma$ on $[0,1]\times \XX \times \mathbb R^n$,
$$\int _{ \ii\times\XX\times\Rn} f(t,x,v)\, \sigma (dtdxdv):=E_P\int_0^1  f(t, X_t, \dot X_t )\,dt$$
This measure can be considered as a generalized velocity that solves the Euler equation in the sense of DiPerna and Majda, see \cite{Bre99} for details.

 \subsection*{Navier-Stokes equation}
 
 Navier-Stokes equation is a modification of the Euler equation where some viscosity  term is added. Its Newtonian expression is
\begin{equation*}
\mathrm{D}_t v=a\Delta v - \nabla p
\end{equation*}
where  $\Delta v$, the Laplace operator applied to $v,$ represents   a viscosity force, $a>0$.
This equation mixes the acceleration $ \mathrm{D}_t v$ which is easily expressed in Lagrangian coordinates and the viscosity term $\Delta v$ which is easily expressed in terms of Eulerian coordinates. Rewriting everything in  Eulerian terms leads to 
 \begin{equation}\label{eqac-04}
\left\{\begin{array}{ll}
{\partial_t {v}} + {v}\scal \nabla {v}-a \Delta v+ \nabla p=0, & t\ge0, x\in\XX\\
\nabla\scal v=0, & t\ge 0, x\in \XX\\
v(0,\cdot)=v_0, &t=0.
\end{array}\right.
\end{equation}
This is the Navier-Stokes equation of the unknown $(v,p)$ for an incompressible fluid seen as a Cauchy problem with a given initial velocity field $v_0.$

\subsection*{Introducing the Brownian motion}

The presence of the Laplacian in \eqref{eqac-04} strongly suggests that considering Brownian paths instead of regular paths in \eqref{eq-08} might lead us to an approach of the Navier-Stokes equation similar to Brenier's approach to the Euler equation. But one immediately faces the problem of defining the kinetic action $\Iii |\dot X_t|^2/2 \, dt$ since the Brownian sample paths are nowhere differentiable and any discrete approximation of the action diverges to infinity. One is forced to introduce an analogue of the average action $E_Q\Iii |\dot X_t|^2/2 \, dt$ that  appeared in \eqref{eq-08} by considering 
\begin{equation}\label{eq-10}
E_Q \Iii |v^Q_t(X)|^2/2\,dt
\end{equation} 
with a relevant notion of \emph{stochastic velocity} $v^Q_t(X)$ introduced in place   of  the usual velocity $v(t,X_t)=\dot X_t$,  undefined in the present context where the path measure $Q$ charges  Brownian sample paths. A relevant notion of stochastic velocity  was introduced by Nelson in \cite{Nel67}.  
The \emph{forward} stochastic velocity is defined by
\begin{align}\label{eq-11}
\vf^Q_t(X _{ [0,t]}):=\lim _{ h\to 0^+} \frac{1}{h}E_Q(X _{ t+h}-X_t\mid X _{ [0,t]})
\end{align}
and 
its \emph{backward} counterpart by
\begin{align}\label{eq-12}
\vb^Q_t(X _{ [t,1]}):=\lim _{ h\to 0^+} \frac{1}{h}E_Q(X _{ t}-X _{ t-h}\mid X _{ [t,1]}),
\end{align}
provided that $X$ is a $Q$-integrable process and these limits exist in some sense.
The stochastic action \eqref{eq-10} computed  with Nelson's  stochastic velocity $v^Q=\vf^Q$  can be expressed in terms of a relative entropy with respect to the reversible Brownian motion.  

In the whole paper the reference path measure is the law $R$ of the reversible Brownian motion on $\XX= \Tn$ or $\XX=\Rn$ with constant diffusion coefficient $a>0.$

\begin{definition}\label{def-01}
The reversible Brownian path measure $R$ is defined by
\begin{align*}
R=\IX R^x(\cdot)\,\vol(dx)\in\OO
\end{align*}
where for each $x\in\XX,$ $R^x\in\PO$ is the law of $x+ \sqrt{a}\,B$ where $B$ is a standard Brownian motion on $\XX$ starting from $0.$
\end{definition}

Roughly speaking, $R$ is the Wiener measure with diffusion coefficient $a$ on $\XX$ starting from $R_0=\vol\in\PX.$  As $R$ is a reversible Markov measure, its forward and backward velocities are opposite to each other: $\vf^R+\vb^R=0.$ Be aware that we have chosen Nelson's convention when defining the backward velocity $\vb^Q_t(X _{ [t,1]})=-[\vf ^{ (X^*)\pf Q}_{ 1-t}\circ X^*](X _{ [t,1]})$ where $X^*_t:= X _{ 1-t},$ $0\le t\le 1,$ is the time reversed canonical process.
\\
When $\XX=\Rn,$ $R\in\MO$ is an unbounded $ \sigma$-finite measure and when $\XX=\Tn,$ $R\in\PO$ is a probability measure. Note that in any case, the conditioned path measures $R^x\in\PO$ are probability measures.

  Girsanov's theory allows to show that for any $Q\in\MO$ with a finite relative entropy $H(P|R)< \infty,$ there is some predictable vector field $\vf^Q(t,X _{ [0,t]})$ such that $Q$ is the unique solution, among the path  measures with the initial marginal $Q_0$ and  which are absolutely continuous with respect to $R$, of the martingale problem associated with the family of second order differential operators defined for any twice differentiable function $u$   and all $0\le t\le 1$, by
\begin{equation*}
L_tu= \vf^Q(t,X _{ [0,t]})\scal \grad u+ a\Delta u/2.
\end{equation*}
Moreover, we have
\begin{equation}\label{eq-13}
H(Q|R)=H(Q_0|R_0)+ \frac{1}{2a}E_Q\Iii |\vf^Q(t,X _{ [0,t]})|^2\,dt.
\end{equation}
For the details, see for instance \cite{Leo11a}.
\\
These  considerations were put forward a long time ago by Yasue in \cite{Ya83} who introduced the stochastic action \eqref{eq-10} but didn't take advantage of its representation \eqref{eq-13} in terms of the relative entropy, although \eqref{eq-13} is invoked in \cite[p.\,135]{Ya83}.
Since $R$ is reversible with $R_0=R_1=\vol$,  we also obtain
\begin{equation*}
H(Q|R)=H(Q_0|\vol)+\frac{1}{2a}E_Q\Iii |\vf^Q_t|^2\,dt=\frac{1}{2a}E_Q\Iii |\vb^Q_t|^2\,dt+ H(Q_1|\vol)
\end{equation*}
where the stochastic drift fields $\vf^Q$ and $\vb^Q$ given by Girsanov's theory are precisely the forward and backward stochastic velocities of $Q$ properly defined in some $L^2$ spaces. It is therefore natural to address the following entropy minimization problem 
\begin{equation}\label{eq-14}
H(Q|R)\to \textrm{min}; \qquad Q\in\MO: [Q_t=\vol,\forall 0\le t\le1],\  Q _{01}=\pi
\end{equation}
in analogy with Brenier's problem \eqref{eq-08}. Of course, as $Q_0$ and $Q_1$ are prescribed to be the volume measure, we see that
\begin{equation*}
H(Q|R)=\frac{1}{2a}E_Q\Iii |\vf^Q_t|^2\,dt=\frac{1}{2a}E_Q\Iii |\vb^Q_t|^2\,dt,
\end{equation*}
strengthening  the analogy with  \eqref{eq-08}.

\subsection*{Bredinger's problem}

Recall that the dynamical version of the  Schr\"odinger problem amounts to minimize the relative entropy 
\begin{equation}\label{eq-15}
H(Q|R)\to \textrm{min};\qquad Q\in\MO: Q_0=\mu_0, Q_1=\mu_1
\end{equation}
among all the path  measures $Q$ such that the initial and final marginals $Q_0$ and $Q_1$ are prescribed to be equal to given  measures $\mu_0$ and $\mu_1\in\MX.$  For more details on this convex optimization problem see \cite{Foe85,FG97,Leo12e}.
As  Problem \eqref{eq-14} is an hybrid of Brenier's problem \eqref{eq-08} and Schrödinger's problem \eqref{eq-15}, we call it the Bredinger problem. We introduce the following extension of \eqref{eq-15}:

\begin{equation}\label{eq-bdg}
H(Q|R)\to \textrm{min};\qquad Q\in\MO: (Q_t=\mu_t,\  \forall t\in \TT ),\quad Q _{01}=\pi
\tag{Bdg}
\end{equation}
with $ \TT $ a measurable subset of $\ii$ and $( \mu_t) _{ t\in \TT }$ a prescribed set of nonnegative  measures on $\XX.$ It is a slight   extension of \eqref{eq-14} where the state space $\XX$ and the prescribed marginals $( \mu_t) _{ t \in \TT }$ are general. We still call the extension \eqref{eq-bdg} of \eqref{eq-14} the Bredinger problem.
\\
Of course, for this problem to admit a solution, it is necessary that
$H(\mu_t|\vol)<\infty$ for all $t\in \TT $,  $H(\pi|R _{01})<\infty$ (recall \eqref{eq-03}) and $\left\{\begin{array}{lcl}
\pi_0&:=&\pi(\cdot\times\XX)=\mu_0,\\ 
\pi_1&:=&\pi(\XX\times\cdot)=\mu_1.
\end{array}
\right.
$

\begin{proposition}
Problem \eqref{eq-bdg} admits a solution if and only if there exists some $Q\in\MO$ such that $Q_t= \mu_t$ for all $t\in\TT,$ $Q _{ 01}=\pi$ and $H(Q|R)< \infty.$ In this case, the solution $P$ is unique.
\end{proposition}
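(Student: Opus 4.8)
The plan is to establish the equivalence and then uniqueness. The forward implication (a solution exists $\Rightarrow$ the feasible set is nonempty with finite entropy) is immediate: if $P$ solves \eqref{eq-bdg}, then by definition $P$ is admissible, i.e. $P_t=\mu_t$ for all $t\in\TT$, $P_{01}=\pi$, and the value $H(P|R)$ of the minimization problem is finite (otherwise the infimum would be $+\infty$ and no minimizer in the usual sense would exist; alternatively one notes that $H\ge -\infty$ is excluded here because $R$ restricted to the relevant $\sigma$-field is a probability measure on $\XX=\Tn$, or on $\Rn$ one uses that $H(P|R)\ge H(P_t|\mathrm{vol})>-\infty$ via \eqref{eq-03} together with $\mu_t$ having finite entropy). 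So take $Q=P$.

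For the converse, suppose the feasible set $\mathcal C:=\{Q\in\MO:\ Q_t=\mu_t\ \forall t\in\TT,\ Q_{01}=\pi\}$ contains some $Q_0$ with $H(Q_0|R)<\infty$. First I would reduce to a problem over probability measures: since $\mu_0=\pi_0$ has a fixed finite total mass (equal to $1$ if $\XX=\Tn$, or to the relevant normalization when $\XX=\Rn$ — in the $\vol$-initial-marginal case one works fiberwise using the disintegration $Q=\int Q^x\,\vol(dx)$ and the decomposition formula following \eqref{eq-03}, reducing to a genuine probability problem on each fiber), every $Q\in\mathcal C$ has the same total mass, and one may renormalize to reduce to the case $R,Q\in\PO$. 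Then I would invoke the standard direct method for entropy minimization: $H(\cdot|R)$ is convex and lower semicontinuous for the weak topology induced by bounded measurable (in fact bounded continuous) functions on $\OO$, and its sublevel sets $\{H(\cdot|R)\le c\}$ are relatively compact in that topology (this is where one uses that $R$ is a fixed probability measure and the $\sigma$-field on $\OO$ is the canonical one, so Dunford–Pettis / Prokhorov-type compactness applies). The feasible set $\mathcal C$ is defined by the linear constraints $Q\mapsto Q_t$ and $Q\mapsto Q_{01}$, which are continuous for this topology, hence $\mathcal C$ is closed. Intersecting $\mathcal C$ with a sublevel set $\{H(\cdot|R)\le H(Q_0|R)\}$ gives a nonempty compact set on which the lower semicontinuous functional $H(\cdot|R)$ attains its minimum; the minimizer $P$ lies in $\mathcal C$ and has $H(P|R)\le H(Q_0|R)<\infty$, so it solves \eqref{eq-bdg}.

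Uniqueness follows from strict convexity. The constraint set $\mathcal C$ is convex, and on the set where $H(\cdot|R)$ is finite the functional $Q\mapsto H(Q|R)=\int \log(dQ/dR)\,dQ$ is strictly convex (this is the classical strict convexity of relative entropy: if $P^0\ne P^1$ are both feasible with finite entropy and $P^{1/2}=(P^0+P^1)/2$, then $H(P^{1/2}|R)<\tfrac12 H(P^0|R)+\tfrac12 H(P^1|R)$ by strict convexity of $r\mapsto r\log r$ on $(0,\infty)$ applied to the densities, using that $P^0\ne P^1$ means the densities differ on a set of positive $R$-measure). Hence two distinct minimizers would contradict optimality, so $P$ is unique.

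I expect the main obstacle to be the compactness/lower-semicontinuity argument in the unbounded case $\XX=\Rn$, where $R$ is only $\sigma$-finite and one cannot directly quote the probabilistic compactness results: here the fiberwise reduction via $R=\int R^x\,\vol(dx)$ and the entropy decomposition $H(Q|R)=\int H(Q^x|R^x)\,\vol(dx)$ (each $R^x$ a probability measure) must be handled with care, checking that the marginal constraints $Q_t=\mu_t$ and $Q_{01}=\pi$ pass to and from the fiberwise formulation and that the fiberwise minimizers assemble measurably into a single $P\in\MO$. The compact state space case $\XX=\Tn$ is comparatively routine.
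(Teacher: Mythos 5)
Your argument is correct, and it takes a genuinely different route from the paper, at least in the finite-mass setting. You run the classical direct method: uniform integrability (de la Vallée-Poussin/Dunford--Pettis, or Prokhorov after noting that entropy balls are uniformly absolutely continuous with respect to the tight measure $R$) gives weak compactness of the sublevel sets of $H(\cdot|R)$, the constraint maps $Q\mapsto Q_t$, $Q\mapsto Q_{01}$ are continuous and the feasible set closed for that topology, and lower semicontinuity plus strict convexity of $H(\cdot|R)$ give existence and uniqueness. The paper instead obtains the statement as a by-product of the abstract convex-duality Theorem \ref{res-02}(1): after Lemma \ref{res-01} identifies $I=\Theta^*$ with $Q\mapsto H(Q|R^{\mu_0})$ on $\mathrm{B}(\OO)'$, lower semicontinuity is automatic (a convex conjugate) and compactness is Banach--Alaoglu, with uniqueness again from strict convexity; this is done under Hypotheses \ref{hyp-02}, i.e.\ for probability-valued data, and buys at the same time the dual equality and the characterization \eqref{eq-26}. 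Your approach is more elementary and self-contained; the paper's packages existence together with the duality it needs anyway.

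There is, however, a genuine gap in the way you treat the case the Proposition also covers, namely $Q\in\MO$ with infinite-mass constraints ($\XX=\Rn$, $\mu_t=\vol$, $R$ only $\sigma$-finite). Your proposed fiberwise reduction via $Q=\IX Q^x\,\vol(dx)$ and $H(Q|R)=\IX H(Q^x|R^x)\,\vol(dx)$ does not yield a proof as written: the constraints $Q_t=\mu_t$ and $Q_{01}=\pi$ are constraints on the aggregate measure and couple the fibers, so one cannot minimize fiber by fiber and ``assemble''; and when $R(\OO)=\infty$ the sublevel sets of $H(\cdot|R)$ are no longer weakly compact (mass can escape), so the direct method cannot be quoted verbatim either. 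A repair that stays within your method when the constraints have finite mass but $R$ does not (e.g.\ probability marginals on $\Rn$): use the paper's reduction \eqref{eq-17} (or, equivalently, condition on $(X_0,X_1)$ using that $Q_{01}=\pi$ is the same for every feasible $Q$, via \eqref{eq-02}), which replaces $R$ by the probability reference $R^{\mu_0}$ and puts you back in the situation where your compactness argument applies. For genuinely infinite-mass constraints ($\mu_0=\vol$ on $\Rn$) an additional exhaustion/localization argument over $\XXX$ is needed to extract and identify a limit; this step is spelled out neither in your sketch nor, to be fair, in the paper, which proves the attainment statement only under Hypotheses \ref{hyp-02}.
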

\begin{remark}\label{ext1}
It can be checked without difficulty that this result is also valid when the state space $\XX$ 
 is a stochastically complete Riemannian manifold with smooth boundary, $R$ is the reversible Brownian
 path measure with initial marginal $R_0={\rm vol}$.
\end{remark}

\subsection*{Fundamental example on the torus}

 As a basic important example, we take $R\in\PO$  the  Wiener measure on the flat torus $\XX=\mathbb{T}^n$ with initial marginal $R_0=\vol:$ the normalized volume measure (so that $R$ is reversible), $ \mu_t=\vol$ for all $t\in \TT =\ii$  and $\pi$ any bi-stochastic measure, i.e.\ such that $\pi_0=\pi_1=\vol.$ In this setting, \eqref{eq-bdg} becomes \eqref{eq-14} which is as close as possible to Brenier's problem \eqref{eq-08}. 
 \\
It is proved in Corollary \ref{res-07} that in this precise setting, Bredinger's problem \eqref{eq-14} admits a solution if and only if $ \pi $ is such that $H( \pi|R _{ 01})< \infty.$
 
 \begin{remark}\label{ext2}
  In the context of Remark~\ref{ext1}, the fundamental example extends to a compact manifold $\XX$ on which acts transitively
  a compact group of isometries with bi-invariant metric. 
 \end{remark}

 \subsection*{A simplified problem}
 
It will be convenient to consider the easy version of the Bredinger problem \eqref{eq-bdg} with a finite set $ \TT = \left\{t_1,\dots,t_K\right\} $:
\begin{equation}\label{eq-16}
H(Q|R)\to \textrm{min};\qquad Q\in\MO: (Q _{ t_k}=\mu _{ t_k},  0< t_1< \cdots< t_K< 1),\quad Q _{01}=\pi
\end{equation}
where only a finite number of marginal constraints are prescribed.

\section{Duality}\label{sec-dual}

The duality of Brenier's problem was studied in \cite{Bre93}. The present section is devoted to the dual problem of Bredinger's problem. In contrast with Brenier's problem which is affine, the strict convexity of Bredinger's problem allows for a rather standard treatment based on general convex analysis in infinite dimensional spaces stated below at Theorem \ref{res-02}. The main results of the section are the dual equality of Proposition \ref{res-10} holding under  weak hypotheses, and a characterization of the solution of Bredinger problem stated at Corollary \ref{res-03}, valid under restrictive regularity assumptions.

The dual problem of \eqref{eq-bdg} is stated at \eqref{eq-52}. Its unknown are a pressure scalar field $p:\iX\to \RR$ in duality with the incompressibility  constraint and a function $\eta:\XXX\to\RR$ in duality with the endpoint constraint $\pi.$

\subsection*{Dual equality}

Let us denote for all $x\in\XX,$ $R^x:=R(\cdot\mid X_0=x)\in\PO$ and $R ^{ \mu_0}=\IX R^x(\cdot)\, \mu_0(dx)\in\MO.$ They describe respectively the reference kinematics   starting from $x$ or from the initial distribution $R ^{ \mu_0}_0= \mu_0.$ By \eqref{eq-02} with $f=X_0,$ 
\begin{align}\label{eq-17}
H(Q|R)=H( \mu_0|R_0)+H(Q| R ^{ \mu_0})
\end{align}
for all $Q\in\MO$ such that $Q_0= \mu_0.$ Therefore, as soon as $H( \mu_0|R_0)< \infty$ (this is verified when \eqref{eq-bdg} admits a solution), it is equivalent to solve the modified Bredinger problem
\begin{align}\label{eq-bdgm}
H(Q|R ^{ \mu_0})\to \textrm{min};\qquad Q\in\MO: Q_t= \mu_t,\ t\in\TT,\quad Q _{ 01}= \pi,
\tag{Bdg$ ^{ \mu_0}$}
\end{align}
or \eqref{eq-bdg}. The problems \eqref{eq-bdg} and \eqref{eq-bdgm} share the same solution but their values differ from the  quantity $H( \mu_0|R_0)$ that only depends on the prescribed data $R$ and $ \mu.$ As far as duality is concerned, it will be a little bit more comfortable to consider \eqref{eq-bdgm} rather than \eqref{eq-bdg}.

Take $ \alpha$  a probability measure on $ \TT $ and consider the following  weakening of \eqref{eq-bdg}:
\begin{equation}\label{eq-18}
H(P|R)\to \textrm{min};\qquad P\in\MO: (P_t=\mu_t,\  \textrm{ for $ \alpha$-almost all } t\in \TT ),\quad Q _{ 01}= \pi.
\end{equation}
For instance, one may take $ \alpha=K ^{ -1}\sum _{ 1\le k\le K} \delta _{ t_k}$ for the Bredinger problem \eqref{eq-16}. Choosing $ \alpha= \mathrm{Leb} _{ \ii}$ leads to  $Q_t=\mu_t,$ for \emph{almost} all $t\in\ii$ which is a slight weakening of $Q_t=\mu_t,$ $\forall t\in\ii$ in the original Bredinger problem \eqref{eq-14}. Nevertheless, the following result holds.

\begin{lemma}\label{res-06}
Assume that $ \alpha$ has a full support, i.e.\ 
$
\supp \alpha= \TT ,
$
and consider the following statements:
\begin{enumerate}[(i)]
\item
 \eqref{eq-bdg} admits a solution;
\item
$t\in \TT \mapsto \mu_t\in\PX$ is weakly  continuous on $\TT$;
\item
 \eqref{eq-bdg}  is equivalent to   \eqref{eq-18}.
\end{enumerate}
We have: $(i)\implies (ii)\implies (iii).$
\end{lemma}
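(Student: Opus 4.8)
The plan is to derive both implications from two soft facts. \textit{Fact (a):} the time-marginal flow $t\mapsto Q_t$ of any probability measure $Q\in\PO$ is weakly continuous on the whole interval $\ii$, because $Q$ is carried by $C(\ii,\XX)$: for $f\in\CX$ and $t_n\to t$ one has $f(X_{t_n})\to f(X_t)$ pointwise on $\OO$ with $|f(X_{t_n})|\le\|f\|_\infty$, so dominated convergence gives $\int_\XX f\,dQ_{t_n}=E_Q[f(X_{t_n})]\to E_Q[f(X_t)]=\int_\XX f\,dQ_t$. \textit{Fact (b):} a subset of $\TT$ of full $\alpha$-measure is dense in $\supp\alpha=\TT$, since any nonempty relatively open $U\subseteq\TT$ has $\alpha(U)>0$ and must therefore meet every full-measure set.

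For $(i)\Rightarrow(ii)$ I would take $P$ a solution of \eqref{eq-bdg}, regard it as a (probability) path measure on $C(\ii,\XX)$ with $P_t=\mu_t$ for every $t\in\TT$, invoke Fact (a) to get weak continuity of $t\in\ii\mapsto P_t$, and restrict to $t\in\TT$; this simultaneously yields $\mu_t\in\PX$ and the continuity asserted in (ii). This uses that the competitors, hence $P$, are probability measures, which holds as soon as $\pi\in\PXX$ — the case of interest.

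For $(ii)\Rightarrow(iii)$ the goal is to show that the feasible sets of \eqref{eq-bdg} and \eqref{eq-18} coincide; since a competitor of \eqref{eq-bdg} is obviously a competitor of \eqref{eq-18}, everything lies in the converse. Given $Q\in\MO$ with $Q_{01}=\pi$ and $Q_t=\mu_t$ for $\alpha$-almost every $t\in\TT$, I would set $S:=\{t\in\TT:Q_t=\mu_t\}$; then $\alpha(\TT\setminus S)=0$, so $S$ is dense in $\TT$ by Fact (b). The two maps $t\mapsto Q_t$ (weakly continuous on $\ii\supseteq\TT$ by Fact (a)) and $t\mapsto\mu_t$ (weakly continuous on $\TT$ by (ii)) agree on the dense set $S$; since the weak topology on $\PX$ is metrizable, hence Hausdorff ($\XX$ being Polish), they agree on all of $\TT$. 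Thus $Q_t=\mu_t$ for every $t\in\TT$, i.e.\ $Q$ is admissible for \eqref{eq-bdg}. As the feasible sets then coincide and the objective $H(\cdot|R)$ is unchanged, the two problems are equivalent — in particular they share their value and their unique minimizer.

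The only genuinely delicate step — and the main obstacle — is this upgrade of an ``$\alpha$-almost every $t$'' marginal identity to an ``every $t\in\TT$'' identity; it rests entirely on the tension between density (full support of $\alpha$) and rigidity (weak continuity of the marginal flow on a dense subset). One might also worry about the $\sigma$-finite regime $\XX=\Rn$, $R_0=\vol$, where Fact (a) requires more care; but under hypothesis (ii) the prescribed $\mu_t$, and hence $\pi$ and all competitors $Q$, are probability measures, so no such difficulty arises in the situation relevant to the fundamental example.
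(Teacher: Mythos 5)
Your proof is correct and follows essentially the same route as the paper: weak continuity of $t\mapsto Q_t$ from sample-path continuity (plus dominated convergence) gives $(i)\Rightarrow(ii)$, and the full-support/density argument upgrades the $\alpha$-a.e.\ marginal constraint to the everywhere constraint, giving $(ii)\Rightarrow(iii)$. You merely spell out the details (metrizability/Hausdorffness of the weak topology, the probability-measure reduction) that the paper leaves implicit.
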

\begin{proof}
Since for any $P\in\MO,$ $t\in\TT\mapsto P_t\in\PX$ is weakly  continuous (this follows from the  continuity of the sample paths), it is necessary for \eqref{eq-bdg} to admit a solution such that 
$t\mapsto \mu_t$ is also weakly  continuous.  In such a case, under the assumption that $\supp \alpha= \TT ,$ the constraint $(P_t=\mu_t,\  \textrm{for $ \alpha$-almost all } t\in \TT )$ is equivalent to $(P_t=\mu_t,\  \forall t\in \TT ).$
\end{proof}

For the moment, it is assumed that the constraint $ \mu= (\mu_t) _{ t\in\TT}$ is a flow of \emph{probability measures} and $\pi\in\PXX$ is also a probability measure. In particular, this implies that \eqref{eq-bdg} is
\begin{equation*}
H(Q|R)\to \textrm{min};\qquad Q\in\PO: (Q_t=\mu_t,\  \forall t\in \TT ),\quad Q _{01}=\pi
\end{equation*}
where $Q$ lives in $\PO$ rather than in $\MO.$

\begin{hypotheses}\label{hyp-02}
\begin{enumerate}[(a)]
\item
The constraint $ \mu= (\mu_t) _{ t\in\TT}$ is a flow of \emph{probability measures}.

\item
The mapping $t\in \TT \mapsto \mu_t\in\PX$ is weakly  continuous on $\TT$ .  

\item
We choose $ \alpha\in \mathrm{P}(\TT)$ such that
$ \supp\alpha=\TT.$
\end{enumerate}
\end{hypotheses}
It follows from Lemma \ref{res-06} that we can assume Hypothesis \ref{hyp-02}-(b) almost without any loss of generality, so that \eqref{eq-bdg} and \eqref{eq-18} are equivalent. 

The  aim of this section is to prove a dual equality for \eqref{eq-bdg}.

 Before  stating it at Proposition \ref{res-10}, we must introduce some notions and notation.
The value function of problem \eqref{eq-bdgm} is denoted  by
\begin{align*}
J( \mu, \pi):=\inf \eqref{eq-bdgm}
	=\inf \left\{ H(Q|R ^{ \mu_0}); Q\in\PO: Q_t= \mu_t,\ \forall t\in\TT,
		\ Q _{ 01}= \pi\right\}.
\end{align*}

We denote respectively $\BTX$ and $\BXX$ the  spaces of bounded measurable functions of $\TX$ and $\XXX.$

\begin{proposition}\label{res-10}
For any  $\pi\in\PXX$, $ \mu\in\PX ^{ \TT}$ and  $ \alpha\in \PT$ satisfying the Hypotheses \ref{hyp-02} , we have
\begin{align*}
\inf \eqref{eq-bdg}=H( \mu_0|R_0)+J( \mu, \pi).
\end{align*}
Moreover, for any class of functions  $ \mathcal{A} $ dense in $B(\TX)\times B(\XXX)$ with respect to the pointwise convergence, the dual equality is
\begin{align}\label{eq-52}
J( \mu, \pi)
	=\sup _{ (p,\eta)\in \mathcal{A}} \Big\{ &\left\langle p,\ma \right\rangle + \left\langle \eta, \pi \right\rangle\nonumber \\
	&- \IX \log E _{ R^x} \exp\left( \IT p(t,X_t)\, \alpha(dt)+ \eta(x,X_1)\right) \, \mu_0(dx)\Big\},
\end{align}
where we denote $\ma(dtdx):=  \alpha(dt)\mu_t(dx).$
 \end{proposition}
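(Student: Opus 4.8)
The plan is to derive Proposition~\ref{res-10} from an abstract convex duality theorem (Theorem~\ref{res-02}, quoted above) applied to the entropy functional $H(\cdot\,|\,R^{\mu_0})$ on $\PO$, with the linear constraints being the marginal prescriptions $Q_t=\mu_t$ ($t\in\TT$) encoded through the probability measure $\alpha$, and $Q_{01}=\pi$. The first step is the reduction to the modified problem: by the disintegration formula \eqref{eq-17}, $H(Q|R)=H(\mu_0|R_0)+H(Q|R^{\mu_0})$ for every $Q$ with $Q_0=\mu_0$, so $\inf\eqref{eq-bdg}=H(\mu_0|R_0)+J(\mu,\pi)$; this already gives the first displayed identity. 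The prescription $Q_0=\mu_0$ is automatically forced by the constraint ($Q_0=\mu_0$ is the $t=0$ marginal since $0\in\TT$ and $\pi_0=\mu_0$), so nothing is lost. By Lemma~\ref{res-06} (using Hypotheses~\ref{hyp-02}(b)--(c)) the constraint ``$Q_t=\mu_t$ for all $t\in\TT$'' may be replaced by ``$Q_t=\mu_t$ for $\alpha$-a.e.\ $t$'', which is exactly the form $\int_{\TT}\langle p(t,\cdot),Q_t\rangle\,\alpha(dt)=\langle p,\ma\rangle$ needed to pair against test functions $p\in B(\TX)$.

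Next I would set up the constraint map. Define $\Phi:\PO\to B(\TX)^*\times B(\XXX)^*$ (or, more carefully, into the appropriate space of pairings) by $Q\mapsto\big((t,x)\mapsto Q_t(dx)\ \text{integrated against }\alpha,\ Q_{01}\big)$, which is affine and weakly continuous in the relevant sense; the constraint set of \eqref{eq-bdgm} is $\{Q:\Phi(Q)=(\ma,\pi)\}$. The convex conjugate computation is the heart of the matter: for $(p,\eta)\in B(\TX)\times B(\XXX)$ one computes
\begin{align*}
\sup_{Q\in\PO}\Big\{\langle p,\ma_Q\rangle+\langle\eta,Q_{01}\rangle-H(Q|R^{\mu_0})\Big\}
=\log\int_\OO \exp\!\Big(\int_\TT p(t,X_t)\,\alpha(dt)+\eta(X_0,X_1)\Big)\,dR^{\mu_0},
\end{align*}
which is the classical variational (Donsker--Varadhan / Gibbs) formula for relative entropy: the supremum is attained at the tilted measure $dQ\propto \exp(\cdots)\,dR^{\mu_0}$, and is finite because $p,\eta$ are bounded. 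Writing $R^{\mu_0}=\int_\XX R^x\,\mu_0(dx)$ and noting that under $R^x$ one has $X_0=x$, the exponential integral factorizes as $\int_\XX \log E_{R^x}\exp(\int_\TT p(t,X_t)\,\alpha(dt)+\eta(x,X_1))\,\mu_0(dx)$ — but one must be a little careful: the $\log$ does not pass through the $\mu_0$-integral in general. The correct statement is $\log E_{R^{\mu_0}}\exp(\cdots)=\log\int_\XX E_{R^x}\exp(\cdots)\,\mu_0(dx)$, and the claimed dual functional in \eqref{eq-52} has $\int_\XX\log E_{R^x}(\cdots)\,\mu_0(dx)$ instead. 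These differ; the resolution is that one should apply duality \emph{fibre-wise} over $X_0$, using the entropy decomposition $H(Q|R^{\mu_0})=\int_\XX H(Q^x|R^x)\,\mu_0(dx)$ (valid since $Q_0=\mu_0$) and the fact that the constraint is itself fibre-wise in an appropriate sense once $\eta=\eta(x,X_1)$ and $p=p(t,X_t)$ are plugged in, so that the conjugate is $\int_\XX[\sup_{Q^x}\{\cdots-H(Q^x|R^x)\}]\,\mu_0(dx)=\int_\XX\log E_{R^x}\exp(\cdots)\,\mu_0(dx)$. This fibre-wise reduction is the step I expect to require the most care.

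With the conjugate in hand, the dual problem is $J(\mu,\pi)=\sup_{(p,\eta)}\{\langle p,\ma\rangle+\langle\eta,\pi\rangle-\int_\XX\log E_{R^x}\exp(\int_\TT p\,d\alpha+\eta)\,\mu_0(dx)\}$, and it remains to justify the absence of a duality gap, i.e.\ $\inf=\sup$. For this I would invoke the abstract theorem (Theorem~\ref{res-02}) cited earlier: one needs a Slater-type qualification or, in the entropic setting, the standard criterion that the primal value is finite (equivalently, by the Proposition on existence stated above, that some $Q$ with $H(Q|R)<\infty$ satisfies all constraints) together with lower semicontinuity and convexity of $H(\cdot|R^{\mu_0})$ and weak-$*$ compactness of sublevel sets. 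The entropy $H(\cdot|R^{\mu_0})$ is $\sigma(\PO,C_b)$-lower semicontinuous with compact sublevels, so the infimum is attained (consistent with the uniqueness/existence Proposition), and the Fenchel--Rockafellar theorem gives the dual equality with no gap; when the primal is infeasible both sides are $+\infty$ by convention. Finally, to pass from the supremum over all bounded measurable $(p,\eta)$ to a supremum over any pointwise-dense class $\mathcal{A}\subset B(\TX)\times B(\XXX)$, I would argue that the dual objective is continuous along pointwise-convergent uniformly-bounded sequences by dominated convergence inside $E_{R^x}$, and then use a truncation/approximation argument (monotone class or a diagonal sequence) to reduce the general bounded case to $\mathcal{A}$; the boundedness needed to apply dominated convergence is the only subtlety, and is handled by first proving the equality with $(p,\eta)$ ranging over bounded functions and then noting density in the pointwise-bounded topology suffices.
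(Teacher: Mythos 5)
Your overall strategy coincides with the paper's: reduce to \eqref{eq-bdgm} via \eqref{eq-17}, use Lemma \ref{res-06} and Hypotheses \ref{hyp-02} to pass to the $\alpha$-a.e.\ form \eqref{eq-18}, apply the abstract duality Theorem \ref{res-02} with the constraint adjoint $T^*(p,\eta)=\IT p(t,X_t)\,\alpha(dt)+\eta(X_0,X_1)$, and obtain the stated dual functional by absorbing the initial-marginal constraint $Q_0=\mu_0$ into the primal functional so that the conjugate decouples over the fibres of $X_0$, turning $\log E_{R^{\mu_0}}\exp(\cdots)$ into $\IX\log E_{R^x}\exp(\cdots)\,\mu_0(dx)$. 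You correctly identified this $\log\!\int$ versus $\int\!\log$ issue as the crux; it is exactly the role of $\Theta$ in \eqref{eq-21} and of Lemma \ref{res-01} in the paper. The final passage from $B(\TX)\times B(\XXX)$ to a pointwise-dense class $\mathcal{A}$ by dominated convergence using \eqref{eq-22} is also the paper's argument.

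The genuine gap is in your justification of the absence of a duality gap. A Slater-type qualification or ``primal value finite'' is neither what Theorem \ref{res-02} requires nor sufficient here, and the topology you invoke, $\sigma(\PO,C_b(\OO))$, is not admissible: the dual variables $p,\eta$ are bounded \emph{measurable} functions, so $T^*(p,\eta)$ is not in $C_b(\OO)$ and the constraint pairing is not continuous for that dual pair, so Fenchel--Rockafellar cannot be run there. What is actually needed is hypothesis (ii) of Theorem \ref{res-02}: $I^*$ bounded above on a neighbourhood of $0$ in $U$. The paper secures this by taking $U=\BO$ with the uniform norm and defining $I:=\Theta^*$, so that $I^*=\Theta^{**}=\Theta$ and \eqref{eq-23} gives (ii) for free; the price is that $U'=\BO'$ contains finitely additive functionals, and one must then prove Lemma \ref{res-01}: finiteness of $I(Q)$ forces $Q$ to be a positive, $\sigma$-additive probability measure with $Q_0=\mu_0$, on which $I(Q)=H(Q|R^{\mu_0})$. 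Your fibre-wise conjugate computation is carried out only over genuine path probability measures with $Q_0=\mu_0$; it identifies the conjugate on nice measures but neither rules out the pathological elements of $\BO'$ (positivity, countable additivity, the correct initial marginal) nor establishes the $\sigma(\BO',\BO)$-lower semicontinuity of your primal functional, both of which must be in place before Theorem \ref{res-02} applies. These are precisely the points Lemma \ref{res-01} supplies; without it (or an equivalent argument fixing the dual pair and computing the conjugate on all of it), the ``no gap'' step remains unjustified, even though the rest of your architecture is the right one.
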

Of course, the first identity is a direct consequence of \eqref{eq-17}.\\
The remainder of the present section is devoted to the proof of the dual equality.

\subsection*{An abstract duality result}

We begin stating  an already known abstract result about convex duality. We shall apply it later  to derive the dual problem of \eqref{eq-bdg} and some basic related relations.
In these lines, we rely on general results about convex duality as presented   for instance in the lecture notes \cite{Leo-LN}. Let $U$ and $V$ be two Hausdorff locally convex topological vector spaces with respective topological dual spaces $U'$ and $V'$.
Let us consider the following minimization problem
\begin{equation}\label{eq-19}
I(\ell)\to \textrm{min}; \quad \ell\in U':\ T\ell=k_o
\tag{$\mathcal{P}$}
\end{equation}
where $I$ is a convex $(-\infty,\infty]$-valued function on $U'$, $T: U'\rightarrow V'$ is a linear operator and $k_o\in V'.$
We assume that the algebraic adjoint operator $T^*$ of $T$ satisfies 
$	
    T^*(V)\subset U,
$	
so that one can write $[T^* v](\ell)=\langle
T^*v,\ell\rangle_{U,U'}=\langle v,T\ell\rangle_{V,V'}.$ It follows that
the diagram
\begin{equation*}
 \begin{array}{ccc}
\Big\langle\ U & , & U' \ \Big\rangle \\
T^\ast  \Big\uparrow & & \Big\downarrow
 T
\\
\Big\langle\ V & , & V'\ \Big\rangle
\end{array}
\end{equation*}
is meaningful. The associated dual problem is
\begin{equation*}
   \langle v,k_o \rangle -I^*(T^*v)\to \textrm{max};\quad v\in V
\tag{$\mathcal{D}$}
\end{equation*}
where
\begin{equation*}
I^*(u):= \sup _{ \ell\in U'} \left\{ \left\langle \ell,u \right\rangle -I(\ell)\right\} ,\quad u\in U
\end{equation*}
is the convex conjugate of $I$ with respect to the duality $ \left\langle U,U' \right\rangle .$

\begin{theorem}\label{res-02}
Let us suppose that the following assumptions on $I$  and $T$ hold:
\begin{itemize}
    \item[(i)]  $I$ is a convex $\sigma(U',U)$-\lsc\ function such that  $\inf I>-\infty;$
       \item[(ii)]  there exists an open neighbourhood $N$ of $0$ in $U$ such that $\sup_{u\in N}I^*(u)<+\infty;$
          \item[(iii)]  $T^*V\subset U.$
\end{itemize}
Then, the following assertions are verified.
\begin{enumerate}
\item
If $\inf(\mathcal{P})<+\infty,$ the primal problem $(\mathcal{P})$ admits at least a solution and if $I$ is strictly convex, this solution is unique.

\item
The dual equality $ \inf(\mathcal{P})= \sup(\mathcal{D})$ holds. That is
\begin{equation*}
    \inf\{I(\ell); \ell\in U': T\ell= k_o\}=\sup_{v\in V}\big\{ \langle k_o,v \rangle 
    -I^*(T^*v)\big\}\in (-\infty,+\infty].
\end{equation*}

\item
 The primal and dual problems are attained at $\bar{\ell}$ and
    $\bar{v}$ respectively if and only if the following  relations hold: $ \left\{ \begin{array}{l}
    T\bar{\ell}=k_o,\\
            \bar{\ell}\in\partial I^*(T^*\bar{v}),
    \end{array}\right.$
    where \\
    $ \partial I^*(T^*\bar{v}):= \left\{ \ell\in U'; I^*(T^*\bar{v}+w)\ge I^*(T^*\bar{v})+ \left\langle \ell, w \right\rangle _{ U',U}, \forall w\in U\right\}$ denotes  the subdifferential of $I^*$ at $T^*\bar{v}$.
\end{enumerate}
\end{theorem}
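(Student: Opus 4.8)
The statement to prove is Theorem~\ref{res-02}, the abstract convex duality result. This is a standard package in infinite-dimensional convex analysis (Fenchel--Rockafellar duality in the form adapted to constraint problems), so the plan is to reduce each assertion to known perturbation-duality machinery rather than to reprove it from scratch. First I would introduce the perturbation function $F:U'\times V'\to(-\infty,\infty]$ defined by $F(\ell,k):=I(\ell)$ if $T\ell=k_o-k$ and $+\infty$ otherwise, so that $\inf(\mathcal P)=F^{**}$-type value at $k=0$, i.e.\ $\inf(\mathcal P)=\varphi(0)$ with $\varphi(k):=\inf_{\ell}F(\ell,k)$. A direct computation of the convex conjugate $F^*$ on $U\times V$ gives $F^*(u,v)=I^*(u+T^*v)-\langle v,k_o\rangle$, using only the identity $\langle v,T\ell\rangle=\langle T^*v,\ell\rangle$ and hypothesis (iii) $T^*V\subset U$ which guarantees $T^*v\in U$ so that the pairing makes sense. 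Evaluating at $u=0$ yields $F^*(0,v)=I^*(T^*v)-\langle v,k_o\rangle$, whence $\sup(\mathcal D)=-\inf_v F^*(0,v)=-\varphi^*(0)$ where $\varphi^*$ is the conjugate of the (not necessarily convex a priori, but we will see convex) value function $\varphi$.

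\textbf{Dual equality (assertion 2).} The dual equality $\inf(\mathcal P)=\sup(\mathcal D)$ is exactly $\varphi(0)=\varphi^{**}(0)$, and the standard sufficient condition for this is that $\varphi$ be convex and subcontinuous (or lower semicontinuous) at $0$, which in turn follows from $\varphi$ being finite and bounded above on a neighbourhood of $0$. Here hypothesis (ii) is tailored precisely for this: $\sup_{u\in N}I^*(u)<\infty$ on an open neighbourhood $N$ of $0$ in $U$ translates, via $F^*(\cdot,0)$ and a Fenchel-type inequality, into $\varphi$ being bounded above near $0$ in $V'$ — more precisely one checks that $\varphi(k)\le \langle v_0,k\rangle + C$ fails to be the right bound and instead one argues directly: since $I^*$ is bounded on $N$, $I=I^{**}$ (using (i), convexity and $\sigma(U',U)$-lower semicontinuity, so the biconjugate theorem applies and $I^*$ is proper), and the standard result (see e.g.\ the lecture notes \cite{Leo-LN}) that bounded-above-on-a-neighbourhood of $I^*$ at the relevant point forces no duality gap. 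Concretely I would cite the general theorem from \cite{Leo-LN} that under (i)--(iii) the dual equality and primal attainment hold; the whole point of stating Theorem~\ref{res-02} in this form is that it is a black box from that reference. So assertion~(2) is ``apply the cited theorem.''

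\textbf{Primal attainment and uniqueness (assertion 1).} For attainment when $\inf(\mathcal P)<\infty$: the feasible set $\{\ell\in U':T\ell=k_o\}$ is $\sigma(U',U)$-closed (preimage of a point under the $\sigma(U',U)$-to-$\sigma(V',V)$ continuous map $T$, continuity being equivalent to $T^*V\subset U$), and $I$ is $\sigma(U',U)$-lsc with $\inf I>-\infty$ by (i). One then invokes a coercivity/compactness argument: the sublevel sets of $I$ intersected with the feasible set are $\sigma(U',U)$-compact, which is where hypothesis (ii) re-enters — boundedness of $I^*$ near $0$ is exactly the condition (by a conjugacy/equicoercivity argument, again in \cite{Leo-LN}) ensuring the sublevel sets $\{I\le r\}$ are relatively $\sigma(U',U)$-compact. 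Lower semicontinuity plus compactness of a nonempty sublevel set on the closed feasible set gives existence of a minimizer. Strict convexity of $I$ then forces uniqueness in the usual way: two distinct minimizers would make the midpoint strictly smaller, contradiction.

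\textbf{Characterization of optimizers (assertion 3).} This is the Fenchel extremality / complementary slackness relation. Suppose $\bar\ell$ solves $(\mathcal P)$ and $\bar v$ solves $(\mathcal D)$ with $\inf(\mathcal P)=\sup(\mathcal D)$. Then from the chain of inequalities
\begin{equation*}
\langle k_o,\bar v\rangle - I^*(T^*\bar v)\le I(\bar\ell) \quad\text{and}\quad I(\bar\ell)+I^*(T^*\bar v)\ge \langle \bar\ell,T^*\bar v\rangle=\langle T\bar\ell,\bar v\rangle,
\end{equation*}
together with $T\bar\ell=k_o$, all inequalities are equalities, so the Fenchel--Young equality $I(\bar\ell)+I^*(T^*\bar v)=\langle\bar\ell,T^*\bar v\rangle$ holds, which is equivalent to $\bar\ell\in\partial I^*(T^*\bar v)$ (equivalently $T^*\bar v\in\partial I(\bar\ell)$). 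Conversely, if $T\bar\ell=k_o$ and $\bar\ell\in\partial I^*(T^*\bar v)$, reversing the argument shows $I(\bar\ell)=\langle k_o,\bar v\rangle-I^*(T^*\bar v)\le\sup(\mathcal D)\le\inf(\mathcal P)\le I(\bar\ell)$, so equality throughout and both are optimal. The only subtlety is that one needs the dual equality (assertion 2) to be in force, which it is.

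\textbf{Main obstacle.} The genuine work is entirely hidden inside ``apply the result from \cite{Leo-LN}'': making precise that hypothesis (ii) (boundedness of $I^*$ on a neighbourhood of $0$ in $U$) simultaneously delivers (a) no duality gap and (b) equicoercivity of $I$ for primal attainment. In a self-contained write-up this is the step one would have to expand — it rests on the biconjugation theorem for $I$ (needing the $\sigma(U',U)$-lower semicontinuity and properness from (i)) plus a translation of ``bounded above on a neighbourhood $\Leftrightarrow$ continuous at an interior point of the domain'' for the conjugate, yielding both subdifferentiability of $I^{**}=I$ and the attainment in the Fenchel duality. Since the theorem is explicitly quoted as ``an already known abstract result,'' the proof in the paper will almost certainly just be a one-line reference to \cite{Leo-LN}, and I would do the same, perhaps after noting the identification $F^*(0,v)=I^*(T^*v)-\langle v,k_o\rangle$ above so the reader sees how the given $(\mathcal P)$, $(\mathcal D)$ pair fits the perturbation-duality template.
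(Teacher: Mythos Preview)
Your reading is exactly right: the paper gives no proof of Theorem~\ref{res-02} at all---it is introduced as ``an already known abstract result'' from the lecture notes \cite{Leo-LN} and simply applied. Your proposal matches this (cite the black box), and in fact goes further by sketching the perturbation-duality template, the compactness-from-(ii) argument for attainment, and the Fenchel--Young extremality for assertion~(3); all of that is sound and more than the paper provides.
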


\subsection*{Proof of Proposition \ref{res-10}}

We apply Theorem \ref{res-02} to the primal problem \eqref{eq-bdg}.  We first chose a relevant set of vector spaces and objective functions $U,U^*, I$ and $I^*.$ Then, we look at the constraint operators $T$ and $T^*$. Finally,  Theorem \ref{res-02} is applied in this setting.

\subsubsection*{The objective functions $I$ and $I^*$}

Let us denote $\BO$ the space of all bounded measurable functions on $\OO$ and equip it with the uniform norm $\|u\|:=\sup _{ \omega\in\OO}| u( \omega)|,
$ $u\in\BO.$
Its topological dual space is denoted by $\BO'.$ The convex  function
 \begin{equation}\label{eq-21}
\Theta(u):=\IX\log (E _{ R^x} e^u)\, \mu_0(dx),\quad u\in\BO
\end{equation}
is well defined on $\BO$ because for any $u\in\BO$ 
\begin{align}\label{eq-22}
|\log E _{ R^x}e ^{ u}|\le \|u\|,\quad \forall x\in\XX,
\end{align}
 implying  
\begin{align}\label{eq-23}
|\Theta(u)|\le\|u\|  < \infty.
\end{align}

Comparing \eqref{eq-bdg}  with \eqref{eq-19} and taking Lemma \ref{res-01} below into account, we see that a good framework to work with is: $(U,\|\cdot\|)=(\BO,\|\cdot\|)$ and 
\begin{equation*}
I(Q):= \Theta^*(Q)= \sup _{ u\in\BO} \left\{ \left\langle Q,u \right\rangle 
	-\IX\log (E _{ R^x} e^u)\, \mu_0(dx)\right\} ,\quad Q\in\BO',
\end{equation*}
 the convex conjugate of $ \Theta.$

\begin{lemma}\label{res-01}
For any $Q\in\BO',$ 
$
I(Q)= \left\{ \begin{array}{ll}
H(Q|R ^{ \mu_0}),& \textrm{if }Q\in\PO \textrm{ and }Q_0= \mu_0,\\
+ \infty,& \textrm{otherwise.}
\end{array}\right.
$
\end{lemma}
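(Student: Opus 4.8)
The plan is to identify the convex conjugate $\Theta^* = I$ explicitly by a two-step argument: first reduce the computation to the well-known variational (Donsker--Varadhan type) formula for relative entropy, then handle the initial-marginal constraint that is implicitly encoded in $\Theta$. Recall that $\Theta(u) = \IX \log(E_{R^x} e^u)\, \mu_0(dx) = \log \int_\OO e^u \, dR^{\mu_0}$ is \emph{not} quite of this form because $R^{\mu_0}$ may be an infinite measure and because $\Theta$ only ``sees'' $u$ through its $R^{\mu_0}$-integrals; the key point is that the logarithmic moment generating functional $u \mapsto \log E_{R^{\mu_0}} e^u$ has convex conjugate equal to $H(\cdot | R^{\mu_0})$ on $\PO$ and $+\infty$ off $\PO$, by the standard duality between relative entropy and the log-Laplace transform (see e.g.\ the references in \cite{Leo-LN}, or \cite{Leo12e}). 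So the heart of the matter is to show that $\Theta(u) = \log E_{R^{\mu_0}} e^u$ as soon as both sides are finite, and that the supremum defining $\Theta^*(Q)$ automatically forces $Q \in \PO$ with $Q_0 = \mu_0$.

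First I would establish the lower bound $I(Q) \ge H(Q|R^{\mu_0})$ when $Q \in \PO$ with $Q_0 = \mu_0$: for such $Q$ and any $u \in \BO$, Jensen's inequality applied fiberwise gives $\log E_{R^x} e^u \ge E_{R^x}(u) - H(Q^x | R^x)$ whenever the right-hand side makes sense (and trivially otherwise), and integrating against $\mu_0(dx) = Q_0(dx)$ together with the disintegration formula $H(Q|R^{\mu_0}) = \IX H(Q^x|R^x)\, \mu_0(dx)$ (valid since $Q_0 = \mu_0 = R^{\mu_0}_0$, as recorded just before Section \ref{sec-bre}) yields $\langle Q, u\rangle - \Theta(u) \le H(Q|R^{\mu_0})$; taking the supremum over $u$ gives the bound. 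For the reverse inequality, I would use that the supremum in $\Theta^*$ over the bounded measurable functions $u$ of the form $u = \log(dQ/dR^{\mu_0})$ truncated to a bounded range is enough to recover $H(Q|R^{\mu_0})$ by monotone convergence, exactly as in the classical proof of the Donsker--Varadhan variational formula; here one uses that $Q \ll R^{\mu_0}$ (otherwise $H(Q|R^{\mu_0}) = +\infty$ and one checks $I(Q) = +\infty$ too by testing against suitable $u$).

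The remaining—and I expect most delicate—point is to show $I(Q) = +\infty$ whenever $Q$ fails to lie in $\PO$ or has $Q_0 \ne \mu_0$. If $Q$ is not a nonnegative measure, one tests with $u = c\mathbf{1}_A$ for a set $A$ on which $Q$ is negative and lets $c \to +\infty$; if $Q(\OO) \ne 1$, testing with constants $u \equiv c$ gives $\langle Q, c\rangle - \Theta(c) = c\,Q(\OO) - c$, which is unbounded in $c$ unless $Q(\OO) = 1$ (here one uses $\Theta(c) = c$ since $\mu_0 \in \PX$). Finally, if $Q \in \PO$ but $Q_0 \ne \mu_0$, pick $\varphi \in C_b(\XX)$ with $\IX \varphi\, dQ_0 \ne \IX \varphi\, d\mu_0$ and test with $u = c\,\varphi(X_0)$: then $\Theta(c\varphi(X_0)) = \IX \log(E_{R^x} e^{c\varphi(x)})\,\mu_0(dx) = c\IX \varphi\, d\mu_0$ exactly (the inner conditional expectation is deterministic), so $\langle Q, c\varphi(X_0)\rangle - \Theta(c\varphi(X_0)) = c\big(\IX \varphi\, dQ_0 - \IX \varphi\, d\mu_0\big)$, unbounded as $c$ ranges over $\RR$. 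This disposes of all cases and, combined with the two inequalities above, proves the claimed identity. The only genuine obstacle is bookkeeping the infinite-measure subtleties of $R^{\mu_0}$ when $\XX = \Rn$; these are circumvented because every test function $u$ is bounded, so $\Theta(u)$ is always finite by \eqref{eq-23}, and the fiberwise probability measures $R^x$ are genuine probability measures by Definition \ref{def-01}.
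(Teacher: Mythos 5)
Your two-sided computation of $I(Q)$ for genuine probability measures $Q$ with $Q_0=\mu_0$ (Jensen fiberwise for the lower bound, truncations of $\log(dQ/dR^{\mu_0})$ plus monotone convergence for the upper bound) is sound and is essentially the same fiberwise variational-formula argument as the paper's. The genuine gap is in the ``otherwise'' case. Your trichotomy --- $Q$ not a nonnegative measure, $Q(\OO)\neq 1$, or $Q\in\PO$ with $Q_0\neq\mu_0$ --- silently assumes that every positive, normalized element of $\BO'$ is a $\sigma$-additive measure. That is false: $U'=\BO'$ is the dual of the space of bounded measurable functions with the sup norm, which contains purely finitely additive functionals (e.g.\ generalized Banach limits along the path), and for such a $Q$ none of your tests produces $+\infty$: there is no set $A$ with $\langle Q,\1_A\rangle<0$, constants give $Q(\OO)=1$, and $u=c\,\varphi(X_0)$ only pins down the action on functions of $X_0$. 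Yet the lemma asserts $I(Q)=+\infty$ for these $Q$, and without ruling them out you cannot even speak of $Q_0$, of the disintegration $Q^x$, or of $H(Q|R^{\mu_0})$. The paper devotes a separate step precisely to this: for a sequence $u_n\downarrow 0$ pointwise, dominated convergence gives $\Theta(au_n)\to 0$ for every $a\ge 0$, so $I(Q)<\infty$ forces $\limsup_n\langle Q,u_n\rangle\le 0$, and combined with positivity this yields $\lim_n\langle Q,u_n\rangle=0$, i.e.\ countable additivity (Daniell's criterion). You need this step, or an equivalent one, before the rest of your argument can be applied.

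Two smaller points. First, your positivity test has a sign slip: if $\langle Q,\1_A\rangle<0$ you should test with $u=-c\,\1_A$ and let $c\to+\infty$ (then $\Theta(-c\1_A)\le 0$ while the linear term blows up); with $u=+c\,\1_A$ the quantity $\langle Q,u\rangle-\Theta(u)$ tends to $-\infty$, not $+\infty$. Also, detecting non-positivity via a set $A$ is legitimate only after observing that positivity on indicators implies positivity on all nonnegative $u\in\BO$ by uniform approximation and norm-continuity of $Q$; stating this makes the step airtight. Second, in the case $Q\in\PO$, $Q_0\neq\mu_0$, testing with $\varphi\in C_b(\XX)$ suffices for Borel measures on the Polish space $\XX$, but it is simpler (and what the paper does) to test with arbitrary bounded measurable $f(X_0)$, which also keeps the argument uniform with the finitely additive setting once $\sigma$-additivity has been settled.
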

\begin{proof}
This proof follows the line of the proof of \cite[Lemma 4.2]{Leo12a}. 
\\
 Let  $Q\in\BO'$  be such that $I(Q)< \infty.$  
\begin{enumerate}[(a)]
\item
Let us show that $Q\ge0.$ Take $u\in \BO,$ $u\ge0.$ As for all $a \le0,$ $ \Theta(au)\le 0,$  we get
\begin{equation*}
 I(Q)\ge \sup _{ a\le0}\{a \left\langle Q,u \right\rangle - \Theta(au)\}
 	\ge \sup _{ a\le 0}a \left\langle Q,u \right\rangle 
	= \left\{ \begin{array}{ll}
	0,& \textrm{if }\left\langle Q,u \right\rangle\ge 0 \\
	+ \infty,& \textrm{otherwise}.
	\end{array}\right.
\end{equation*}
Hence, $I(Q)< \infty$ implies that $ \left\langle Q,u \right\rangle \ge 0,$ for all $u\ge 0,$ which is the desired result.

\item
Let us show that $Q$ is a  positive measure. For a positive element of $\BO'$ to be a measure it is sufficient (and necessary) that it is $ \sigma$-additive. This means that for any decreasing sequence $(u_n) _{ n\ge 0}$ of measurable bounded functions such that  $\lim _{ n\ge0} u_n( \omega)=0$ for all $ \omega\in \Omega,$  we have 
$$
\lim _{ n\to \infty} \left\langle Q,u_n \right\rangle =0.
$$ 
Let $(u_n) _{ n\ge0}$ be such a sequence. By dominated convergence,  for all $a\ge 0,$ we obtain $\lim _{ n\to \infty} \Theta(a u_n)=0.$ Therefore,
\begin{equation*}
I(Q)\ge \sup _{ a\ge0}\limsup _{ n\to \infty}\left\{a \left\langle Q,u_n\right\rangle - \Theta(au_n)\right\} 
	= \sup _{ a\ge0}a\limsup _{ n\to \infty} \left\langle Q,u_n\right\rangle 
\end{equation*}
and $I(Q)< \infty$ implies that $\limsup _{ n\to \infty} \left\langle Q,u_n\right\rangle \le 0.$ Since, we already know that $Q\ge0,$ this gives the desired result: $\lim _{ n\to \infty} \left\langle Q,u_n \right\rangle =0$.

\item
Let $Q\in\MO.$ Taking $u=f(X_0)$ in $\sup_u$ gives
\begin{align*}
I(Q)\ge \sup _{ f} \left\langle f,Q_0- \mu_0 \right\rangle 
	= \left\{ \begin{array}{ll}
	0,& \textrm{if }Q_0= \mu_0,\\
	\infty,& \textrm{otherwise.}
	\end{array}\right.
\end{align*}
Hence, $I(Q)< \infty$ implies that $Q_0= \mu_0\in\PX$ which in turns implies that $Q$ is also a probability measure.   
\\
In this case, $Q=\IX Q^x(\cdot)\, \mu_0(dx)$ and
\begin{align*}
I(Q)
	&=\sup _{ u}\IX \left(E _{ Q^x}u-\log E _{ R^x}e^u\right) \, \mu_0(dx)\\
	&\overset{(i)}= \sup _{ k\ge 1}\IX \sup _{ u^x: |u^x|\le k} \left(E _{ Q^x}u^x-\log E _{ R^x}e^{u^x}\right) \, \mu_0(dx)\\
	&= \sup _{ k\ge 1}\IX \sup _{ v: |v|\le k} \left(E _{ Q^x}v-\log E _{ R^x}e^{v}\right) \, \mu_0(dx)\\
	&\overset{(ii)}=\IX H(Q^x|R^x)\, \mu_0(dx)\\
	&\overset{ \eqref{eq-17}}=H(Q|R ^{ \mu_0})
\end{align*}
which is the announced result. Let us give some precisions about this series of identities.
At (i), we used the notation $u^x$ for the restriction of $u$ to $\OO^x:= \left\{ X_0=x\right\} \subset\OO$. Note that the inversion of $\sup_u$ and $\IX$ is valid since any function $u\in\BO$ can be identified with a measurable kernel $(u^x\in B(\OO^x),\ x\in\XX)$ by $u=u ^{ X_0}.$ Identity (ii) follows from a standard variational representation of the relative entropy  of probability measures
 (see  \cite[Appendix]{GL10} for instance) combined with Beppo-Levi's monotone convergence theorem.  
 \end{enumerate}
This completes the proof of the lemma. 
\end{proof}

 Let us compute $I^*= \Theta ^{ **}$ defined by 
$I^*(u):=\sup _{ Q\in\BO'} \left\{ \langle Q,u\rangle -I(Q)\right\},$ $ u\in \BO.
$
As $ \Theta$ is convex (by Hölder's inequality) and lower $ \sigma(\BO,\BO')$-semicontinuous (by Fatou's lemma, it is lower $\|\cdot\|$-semicontinuous and since it is convex, it turns out to be weakly semicontinuous), it is equal to its convex biconjugate. This means that 
$$  
I^*=\Theta ^{ **}= \Theta.
$$

\subsubsection*{The constraint operators $T$ and $T^*$}

 For any $Q\in \BO',$ we define 
 $
 \widetilde{Q}\in B(\TX)'$ by
 $$ \langle \widetilde{Q},p \rangle =\IT \left\langle Q,p(t,X_t) \right\rangle \,\alpha(dt),
\quad p\in B(\TX).
$$
 Clearly, when $Q$ belongs to $\PO,$ $\widetilde{Q}$ is the  measure defined by $\widetilde{Q}(dtdx)=\alpha(dt)Q_t(dx)$. Hence, defining $\ma (dtdx):= \mu_t(dx)\alpha(dt)$, we see that  $\widetilde{Q}=\ma $   is equivalent to $Q_t= \mu_t$ for $ \alpha$-almost all $t\in \TT .$
\\
For any $Q\in\BO'$, we  define $Q _{ 01}\in B(\XXX)'$  by: $ \left\langle Q _{ 01}, \eta \right\rangle = \left\langle Q,\eta(X_0,X_1) \right\rangle ,$ $\forall \eta\in B(\XXX).$
\\
Putting everything together, the constraint operator is defined by
\begin{equation*}
TQ:=(\widetilde{Q}, Q _{ 01 })\in B(\TX)'\times B(\XXX)',\quad Q\in\BO'
\end{equation*}
and the full constraint of \eqref{eq-18} writes as 
$$
TQ=(\ma , \pi),\quad Q\in\BO'.
$$
It is time to identify the topological space $V$ as $V=B(\TX)\times B(\XXX)$ equipped with the uniform norm $\|\cdot\| _{ \TX}\oplus \|\cdot\| _{ \XXX},$ so that its topological space $V'=B(\TX)'\times B(\XXX)'$ contains $ \mathrm{M}_b(\TX)\times \mathrm{M}_b(\XXX).$ 
\\
Let us compute the adjoint $T^*$ of $T$. For any $Q\in\BO',$ $p\in B(\TX)$ and $\eta\in B(\XXX),$ 
\begin{equation*}
\left\langle Q,T^*(p,\eta) \right\rangle 
	= \left\langle TQ,(p,\eta) \right\rangle 
	= \big\langle \widetilde{Q},p \big\rangle + \left\langle Q _{ 1 },\eta \right\rangle 
	= \left\langle Q, \IT p(t,X_t)\,\alpha(dt) + \eta(X_0,X_1)\right\rangle.
\end{equation*}
Consequently, we see that
\begin{align}\label{eq-24}
T^*(p,\eta)=\IT p(t,X_t)\,\alpha(dt) + \eta(X_0,X_1).
\end{align}

\subsubsection*{The dual problem}

We gathered all the ingredients to see that the dual problem of \eqref{eq-18} is 
\begin{alignat}{1}\label{eq-25}
\ITX p\, d\ma+ \IXX \eta\,d\pi-\IX\log E _{ R^x} &\exp \left( \IT p(t,X_t)\,\alpha(dt) + \eta(x,X_1)\right) \, \mu_0(dx)
		\to \textrm{max};\nonumber\\
&\hskip 3cm p\in B(\TX), \ \eta\in B(\XXX).
\end{alignat}

\begin{proof}[Proof of Proposition \ref{res-10}]
It remains to verify the assumptions of Theorem \ref{res-02} to obtain
 \begin{align*}
J( \mu, \pi)
	=\sup _{ (p,\eta)\in B(\TX)\times B(\XXX)} &\Big\{ \left\langle p,\ma \right\rangle + \left\langle \eta, \pi \right\rangle \\
	&- \IX \log E _{ R^x} \exp\left( \IT p(t,X_t)\, \alpha(dt)+ \eta(x,X_1)\right) \, \mu_0(dx)\Big\}.
\end{align*}

\begin{enumerate}[(i)]
\item
Being a convex conjugate, $I= \Theta^*$ is convex and \lsc\ with respect to  $ \sigma(\BO',\BO)$.
\item
We see with \eqref{eq-23}, that the  function $I^*= \Theta$ is such that on the  unit ball $N= \left\{u\in\BO; \|u\|\le1\right\} $, we have: $\sup _{ u:\|u\|\le 1} \Theta(u)< \infty.$

\item
It is clear that for any  $p$ in $B(\TX)$ and  $\eta$ in $B(\XXX),$ $T^*(p,\eta)=\IT p(t,X_t)\,\alpha(dt) + \eta(X_0,X_1)$ is  in $\BO.$
\end{enumerate}
So far we have proved Proposition \ref{res-10} in the special case where $ \mathcal{A}=B(\TX)\times B(\XXX).$ The extension to the case where $ \mathcal{A}$ is pointwise dense in $B(\TX)\times B(\XXX)$ follows easily from an approximation argument combined with the dominated convergence theorem using \eqref{eq-22}.
This completes the proof of Proposition \ref{res-10}.
\end{proof}

\begin{remark}
We have chosen a strong topology on $U=\BO$ to insure the estimate $\sup _{ u\in N} \Theta(u)<\infty$ at (ii) above. This  explains why  Lemma \ref{res-01} is needed.
\end{remark}

\subsection*{Regular solutions}

As a byproduct of this proof, Theorem \ref{res-02} leaves us with  the following

\begin{corollary} \label{res-03}\ 
Assume that Hypothesis \ref{hyp-02} holds.
\begin{enumerate}
\item
If $\inf \eqref{eq-bdg}<+\infty,$ the primal problem \eqref{eq-bdg} admits a unique solution.

\item
Let $P\in\PO$, $p\in B(\TX)$ and $\eta\in  B(\XXX)$.  
 Both the primal problem \eqref{eq-bdg}  and the dual problem \eqref{eq-25} are  attained respectively  at $P$ and
    $(p,\eta)$ if and only if the constraints 
    $P_t=\mu_t, \forall t\in \TT $ and $P _{ 01}= \pi$
    are satisfied and
\begin{equation}\label{eq-26}
P= \frac{d \mu_0}{dR_0}(X_0)\exp \left(\eta(X_0,X_1) - \mathcal{Q}( p,\eta)(X_0)+ \IT p(t,X_t)\,\alpha(dt)\right) \,R
\end{equation}
where 
\begin{align*}
\mathcal{Q} (p, \eta)(x):=\log E _{ R^x}\exp \Big( \IT p(t,X_t)\,\alpha(dt)+\eta(x,X_1)\Big),
\quad x\in\XX,\ R_0\as
\end{align*}
\end{enumerate}
\end{corollary}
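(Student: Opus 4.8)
\textbf{Proof plan for Corollary \ref{res-03}.}
The strategy is to read off both assertions from the three conclusions of Theorem \ref{res-02} applied to the concrete data $U=\BO$, $V=B(\TX)\times B(\XXX)$, $I=\Theta^*$, $I^*=\Theta$, $T$ and $T^*$ as identified in the proof of Proposition \ref{res-10}, together with the computation of $I$ performed in Lemma \ref{res-01}. The hypotheses (i), (ii), (iii) of Theorem \ref{res-02} have already been checked in that proof, so they are available here without further work. For assertion (1), note that $I=\Theta^*$ is strictly convex on its domain: by Lemma \ref{res-01} it coincides with $Q\mapsto H(Q|R^{\mu_0})$ on the affine set $\{Q\in\PO: Q_0=\mu_0\}$ and is $+\infty$ elsewhere, and relative entropy is strictly convex wherever it is finite. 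Since $\inf\eqref{eq-bdg}<+\infty$ is, via the identity $\inf\eqref{eq-bdg}=H(\mu_0|R_0)+J(\mu,\pi)$ of Proposition \ref{res-10}, equivalent to $\inf(\mathcal P)=J(\mu,\pi)<+\infty$, part (1) of Theorem \ref{res-02} gives existence and uniqueness of a minimizer $P$, which is the desired solution of \eqref{eq-bdg}.

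For assertion (2), I would invoke part (3) of Theorem \ref{res-02}: the primal problem \eqref{eq-18} and its dual \eqref{eq-25} are attained at $P$ and $(p,\eta)$ respectively if and only if $TP=(\ma,\pi)$ and $P\in\partial I^*(T^*(p,\eta))=\partial\Theta(T^*(p,\eta))$. The constraint $TP=(\ma,\pi)$ unwinds, by the description of $T$ in the proof of Proposition \ref{res-10} and by Hypothesis \ref{hyp-02}-(b) together with Lemma \ref{res-06}, to exactly $P_t=\mu_t$ for all $t\in\TT$ and $P_{01}=\pi$. So the whole content is to identify the subdifferential $\partial\Theta(u)$ at $u=T^*(p,\eta)$. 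Since $\Theta(u)=\IX\log E_{R^x}e^u\,\mu_0(dx)$ is a smooth (Gâteaux-differentiable) convex function on $\BO$ — the estimate \eqref{eq-22} makes the differentiation under the integral sign and inside $E_{R^x}$ legitimate — its subdifferential at $u$ is the singleton consisting of its Gâteaux derivative, namely the path measure
\begin{equation*}
\partial\Theta(u)=\Big\{\, e^{u}\Big/\!\Big(\tfrac{dR_0}{d\mu_0}(X_0)\,E_{R^{X_0}}e^{u^{X_0}}\Big)\; R\,\Big\},
\end{equation*}
where $u^x$ is the restriction of $u$ to $\{X_0=x\}$; substituting $u=T^*(p,\eta)=\IT p(t,X_t)\,\alpha(dt)+\eta(X_0,X_1)$ and recognizing the normalizing exponent as $\mathcal Q(p,\eta)(X_0)$ yields precisely \eqref{eq-26}. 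Finally one checks that $P\in\PO$ in the statement is compatible: the Radon–Nikodym density in \eqref{eq-26} integrates to one against $R$ exactly because of the $\mathcal Q(p,\eta)$ normalization and $R^{\mu_0}_0=\mu_0$, so the abstract element $\partial\Theta(T^*(p,\eta))\in\BO'$ is indeed a probability path measure and the identification with \eqref{eq-26} is an equality in $\PO$.

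The main obstacle is the precise computation of $\partial\Theta$, i.e. justifying that $\Theta$ is Gâteaux-differentiable on all of $\BO$ with the claimed derivative and that its subdifferential is therefore a singleton: this requires differentiating $a\mapsto\log E_{R^x}e^{u+av}$ under the expectation and then under the $\mu_0$-integral, which is where the uniform bound \eqref{eq-22} (and dominated convergence) does the real work, and it requires checking that no extra subgradients appear because $\Theta$ is finite and continuous everywhere on $\BO$ (so $\partial\Theta(u)$ is nonempty, weak-$*$ compact and, by differentiability, reduces to the gradient). Everything else — unwinding $T$, $T^*$, and the constraints, and matching notation with $\mathcal Q(p,\eta)$ — is bookkeeping already set up in the proof of Proposition \ref{res-10} and in Lemma \ref{res-06}.
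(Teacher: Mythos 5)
Your proposal is correct and follows essentially the same route as the paper: both parts are read off from Theorem \ref{res-02} in the framework set up for Proposition \ref{res-10}, with the only substantive step being the computation of $\partial I^*(T^*(p,\eta))=\partial\Theta(T^*(p,\eta))$ as the singleton given by the derivative $\Theta'(u)=\frac{d\mu_0}{dR_0}(X_0)\exp\left(u-\log E_{R^{X_0}}e^u\right)R$, evaluated at $u=T^*(p,\eta)$ via \eqref{eq-24}. Your additional remarks (strict convexity of the entropy for uniqueness, Gâteaux differentiability plus continuity to rule out extra subgradients, and the unwinding of the $\alpha$-a.e.\ constraint through Hypothesis \ref{hyp-02} and Lemma \ref{res-06}) merely make explicit details the paper leaves implicit.
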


\begin{proof}
Statement (1) is a direct application of Theorem \ref{res-02}-(1) and 
the only thing to be checked for (2)   is the computation of the subdifferential $ \partial I^*(T^*\bar v)$ of Theorem \ref{res-02}-(3) in the present setting. With $I^*= \Theta$ given at \eqref{eq-21}, we obtain for any $u\in \BO,$
\begin{align*}
 \Theta'(u)&= e^u\ \IX R^x(\cdot) \frac{\mu_0(dx)}{E _{ R^x}e^u}
 	=\exp \left( u-\log E _{ R ^{ X_0}}e^u\right)\,\IX R^x(\cdot) \, \mu_0(dx)\\
	&= \exp \left( u-\log E _{ R ^{ X_0}}e^u\right)\, R ^{  \mu_0}
	= \frac{d \mu_0}{dR_0}(X_0) \exp \left( u-\log E _{ R ^{ X_0}}e^u\right)\, R.
\end{align*}
We conclude with \eqref{eq-24}, i.e.\ $T^*\bar v =T^*(p, \eta)=\IT p(t,X_t)\,\alpha(dt)+\eta(X_0,X_1).$
\end{proof}

\section{General shape of the solution}\label{sec-shape}

Assuming the dual attainment $p\in B(\TX)$ and $\eta\in  B(\XXX)$ as in   Corollary \ref{res-03}-(2) is very restrictive. In general, even if the solution $P$ writes as \eqref{eq-26}, $p$ and $\eta$ might be unbounded or even might take the value $-\infty$ at some places.  

It is proved in \cite{BL16} that, provided that the reference path measure $R$ is Markov, the solution $P$ to Bredinger's problem is a reciprocal path measure. A representation of the Radon-Nikodym derivative $dP/dR$ is also obtained. In order to state these results below at Theorem \ref{res-08}, it is necessary to recall the definitions of  Markov and reciprocal measures and also of additive functional.

First of all, we need to make precise what a conditionable path measure is.

\begin{definitions}[Conditionable path measure]\ 
\begin{enumerate}
    \item A positive measure  $Q\in\MO$ is called a path measure.

    \item  The path measure $Q\in\MO$ is said to  be \emph{conditionable} if for all $t\in\ii,$  $Q_t$
is a $\sigma$-finite measure on $\XX.$
\end{enumerate}
\end{definitions}

It is shown in \cite{Leo12b} that for any  conditionable path measure $Q\in\MO,$ the conditional expectation $E_Q(\cdot\mid X_\mathcal{T})$ is well-defined for any $\mathcal{T}\subset\ii.$ This is the reason for this definition.

\begin{definition}[Markov measure] 
A path measure  $Q$ on  $\Omega$ is said to be  \emph{Markov}  if it is conditionable and 
 if for any $t\in[0,1]$ and for any events $A\in \sigma(X_{[0,t]}), B\in \sigma(X_{[t,1]})$
    \begin{equation*}
    Q(A\cap B\mid X_t)=Q(A\mid X_t)Q(B\mid X_t), \quad Q\ae  
    \end{equation*}
\end{definition}
This means that, knowing the present state  $X_t$, the future and  past   informations  $ \sigma(X_{[t,1]})$  and  $ \sigma(X_{[0,t]})$, are  $Q$-independent.  

\begin{definition}[Reciprocal measure] \label{def-01bis}
A path measure  $Q$ on  $\Omega$ is called \emph{reciprocal} if it is conditionable and for any times  $0\le s\le u\le 1$  and  any events  $A\in \sigma(X_{[0,s]}) , B\in \sigma(X_{[s,u]}), C \in \sigma(X_{[u,1]}) $,

\begin{figure}
\includegraphics{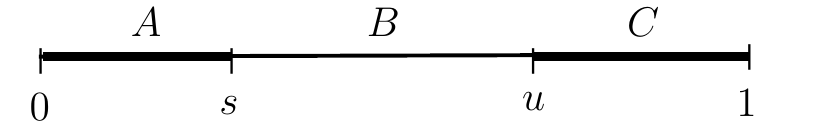}
\caption{}\label{fig-01}
\end{figure}

see Figure \ref{fig-01},
    \begin{equation*}
    Q(A\cap B\cap C \mid X_s, X_u)=Q(A \cap C\mid X_s, X_u)Q(B\mid X_s, X_u)\quad  Q\ae  
    \end{equation*}
\end{definition}

This property  states that under $Q$, 
given the knowledge of the canonical process at  both times  $s$ and
$u$, the events ``inside'' $[s,u]$ and those ``outside'' $(s,u)$  are conditionally independent.
It is clearly  time-symmetric.

\begin{remarks}\ We recall basic relations between the Markov and reciprocal properties.
\begin{enumerate}[(a)]
\item
Any Markov measure is reciprocal, but there are reciprocal measures that are not Markov.
\item
For any reciprocal measure $Q,$ the conditional path measures $Q(\cdot\mid X_0)$ and $Q(\cdot\mid X_1)$ are Markov, $Q\ae$
\end{enumerate}
\end{remarks}

\begin{definition}[Additive functional]
A measurable function $A _{ [0,1]}:\OO\to [- \infty, \infty)$ is said to an $R$-additive functional if for any  finite partition $\bigsqcup _k I_k=[0,1]$ of the  time interval $[0,1]$ with intervals $I_k,$ we have
\begin{equation*}
A _{ [0,1]}=\sum _{ k }A _{ I_k},\quad R\ae
\end{equation*}
where for all $k,$ $A _{ I_k}=A_k(X _{ I_k})$ is $ \sigma(X _{ I_k})$-measurable. 
\end{definition}
With some abuse of notation, we shall write: $A _{ [0,1]}=A=A(X _{ [0,1]}) =\sum_k A(X _{ I_k}).$

We are now ready to state Theorem \ref{res-08} and its hypotheses.

\begin{assumption}\label{ass-01}(Strong irreducibility).\ 
The reference measure $R$ is Markov and it admits a transition density $r$ defined by for all $0\le s<t\le 1$ by
\begin{align*}
R(X_t\in dy\mid X_s=x):=r(s,x;t,y)\, R_t(dy),\quad \forall x\in\XX,\ R_s\ae
\end{align*}
 which is positive in the sense that
\begin{align*}
r(s,x;t,y)>0,\quad \forall (x,y), R_s\otimes R_t\ae,\quad \forall 0\le s<t\le 1.
\end{align*}
\end{assumption}

\begin{theorem}\label{res-08}
Under the above Assumptions \ref{ass-01}, the following assertions hold true.

\begin{enumerate}[(a)]
\item
The solution $P$ of  \eqref{eq-bdg}, if it exists, is  reciprocal and 
\begin{equation}\label{eq-28}
P=\exp (A(X _{ \TT})+ \eta(X_0,X_1))\, R
\end{equation}
for some $[- \infty, \infty)$-valued $ \sigma(X _{ \TT})$-measurable additive functional $A(X _{ \TT})$ and some measurable function $\eta:\XXX\to[- \infty, \infty),$ with the convention that $\exp( - \infty)=0.$

\item
We consider  the prescribed marginals $ \mu _{ t_k}\in\PX,$ $1\le k\le K$ at times  $0\le t_1<\cdots t_K\le1$ and the prescribed endpoint marginal $\pi\in \PXX.$
If the Bredinger problem
\begin{align}\label{eq-29}
H(P|R)\to \mathrm{min};\quad P\in\PO: P _{ t_k}= \mu _{ t_k},\ 1\le k\le K,\ P _{ 01}=\pi,
\end{align}
is such that $\inf \eqref{eq-29}< \infty,$ its unique solution $P$ is reciprocal and writes as
\begin{align}\label{eq-30}
P=\exp \left(\sum _{ 1\le k\le K} \theta _{ t_k}(X _{ t_k})+ \eta(X_0,X_1)\right)\,R
\end{align}
for some measurable functions $ \theta _{ t_k}:\XX\to[- \infty, \infty), 1\le k\le K$ and some measurable function $\eta:\XXX\to[- \infty, \infty).$
\end{enumerate}
\end{theorem}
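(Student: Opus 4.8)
The strategy is to reduce Theorem~\ref{res-08} to the general representation result of \cite{BL16} and then specialize. The main point is that Bredinger's problem \eqref{eq-bdg} is an entropy minimization problem whose constraints are precisely of the type treated there: a prescription of some of the time-marginals $Q_t=\mu_t$, $t\in\TT$, together with a prescription of the endpoint marginal $Q_{01}=\pi$. Since the reference measure $R$ is Markov with a positive transition density (Assumption~\ref{ass-01}), the hypotheses of the relevant theorem in \cite{BL16} are met, and that theorem asserts both that the minimizer is reciprocal and that its Radon--Nikodym density with respect to $R$ factorizes into a part depending only on $X_\TT$ — an additive functional along the partition induced by the constrained times — and a part $\eta(X_0,X_1)$ depending only on the endpoints. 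This is exactly \eqref{eq-28}, with the convention $\exp(-\infty)=0$ accommodating the fact that $A(X_\TT)$ and $\eta$ may take the value $-\infty$ where the density of $P$ vanishes.

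For part~(b) I would simply apply part~(a) with $\TT=\{t_1,\dots,t_K\}$ a finite set. When the constrained time set is finite, every $R$-additive functional $A(X_\TT)$ measurable with respect to $\sigma(X_{t_1},\dots,X_{t_K})$ decomposes, by the very definition of additive functional applied to a partition of $[0,1]$ whose only relevant breakpoints are $t_1,\dots,t_K$, as a sum $\sum_{k=1}^K \theta_{t_k}(X_{t_k})$ of functions of the individual positions. (The interval pieces between consecutive $t_k$'s contribute nothing once one integrates out, since no constraint is imposed there; more precisely, any $\sigma(X_\TT)$-measurable additive functional over a finite $\TT$ is of this separated form up to an $R$-a.s.\ additive constant which can be absorbed into $\eta$.) Combined with the existence and uniqueness from Proposition~\ref{res-10} / Corollary~\ref{res-03}-(1) under $\inf\eqref{eq-29}<\infty$, this yields \eqref{eq-30}.

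I expect two steps to require the most care. The first is verifying that Assumption~\ref{ass-01} genuinely places us within the scope of \cite{BL16}: one must check that the reversible Brownian path measure $R$ of Definition~\ref{def-01} on $\XX=\Tn$ or $\XX=\Rn$ is conditionable (its marginals are $\sigma$-finite, being either $\vol$ or a probability measure) and that its Gaussian-type heat kernel is everywhere strictly positive, which is standard but should be stated. The second, and genuinely delicate, point is the passage from the abstract additive functional $A(X_\TT)$ to the explicit separated sum in part~(b): one needs that for a \emph{finite} constrained time set the additive-functional structure collapses to a sum of single-site functions. This is where I would be most careful, because a priori an additive functional over $[0,1]$ can have contributions from each interval of an arbitrary partition; the argument is that the minimizer's density, being $R$-a.e.\ a product of single-time factors dual to the finitely many marginal constraints plus an endpoint factor, forces those interval contributions to be $R$-a.s.\ constant and hence absorbable. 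The rest — uniqueness, the $[-\infty,\infty)$-valued nature of the exponents, the $\exp(-\infty)=0$ convention — is bookkeeping that follows from strict convexity of $H(\cdot|R)$ and from Corollary~\ref{res-03}.
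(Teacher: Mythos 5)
Your overall strategy coincides with the paper's: the paper gives no argument of its own for Theorem \ref{res-08} but simply defers to \cite{BL16}, and your part (a) is exactly that delegation (together with the routine verification, which you rightly flag, that $R$ is conditionable and has a strictly positive heat kernel so that Assumption \ref{ass-01} puts you in the scope of that reference).

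Where you diverge is part (b), and there your argument has a gap. You claim that any $\sigma(X_{t_1},\dots,X_{t_K})$-measurable $R$-additive functional automatically separates as $\sum_k \theta_{t_k}(X_{t_k})$ "by the very definition of additive functional''. The definition does not give this: choosing a partition whose intervals $I_k$ each contain one constrained time $t_k$ only yields $A=\sum_k A_{I_k}$ with $A_{I_k}$ being $\sigma(X_{I_k})$-measurable, and projecting onto $\sigma(X_\TT)$ by conditional expectation under the Markov measure $R$ produces, for each piece, a function of the constrained positions adjacent to $I_k$ (typically $X_{t_{k-1}},X_{t_k},X_{t_{k+1}}$), not of $X_{t_k}$ alone. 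Your fallback justification --- that the minimizer's density is a product of single-time factors dual to the marginal constraints, which forces the interval contributions to be constant --- is circular, since that product form is precisely the content of \eqref{eq-30} being proved. The clean repair is the one the paper implicitly uses: the result of \cite{BL16} treats the case of finitely many marginal constraints directly and delivers the factorized density \eqref{eq-30} (a multi-marginal Schrödinger-type representation), so part (b) should be cited from there rather than deduced from the abstract additive-functional form of part (a). With that substitution your write-up matches the paper; the uniqueness statement indeed follows from Corollary \ref{res-03}-(1) as you say.
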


\begin{proof}
See \cite{BL16}.
\end{proof}
\begin{remark}\label{ext3}
  Following  Remark~\ref{ext1}, it can be proved that Theorem~\ref{res-08} extends to a stochastically complete 
  Riemannian manifold~$\XX$ with $R$ the reversible Brownian path measure having marginal $R_0={\rm vol}$.
 \end{remark}

We see that \eqref{eq-26} has the desired shape \eqref{eq-28} with\begin{align}\label{eq-31}
A(X _{ \TT})=\int _{ \TT} p(t,X_t)\, \alpha(dt)
\end{align}
and \eqref{eq-31} holds true for the solution $P$ of Bredinger problem under the hypotheses of  Theorem \ref{res-08}-(b) where only finitely many time marginal constraints are considered.

\section{Kinematics of regular solutions}\label{sec-regular}

In this section, the reference path measure $R$ is the law of the reversible Brownian motion with diffusion coefficient $a>0$ on $\XX=\Tn$ or $\XX=\Rn,$ see Definition \ref{def-01}.

\begin{definition}[Regular solution of Bredinger's problem]\label{def-02}
In view of \eqref{eq-26} in Corollary \ref{res-03} and \eqref{eq-30} in Theorem \ref{res-08}, we say that \eqref{eq-bdg} admits a \emph{regular} solution if it can be written as
\begin{equation}\label{eq-32}
P= \exp \Big( \eta(X_0,X_1)+\sum _{ s\in\SS} \theta_s(X_s) +\int _{ \TT}p(t,X_t)\, dt \Big) R
\end{equation}
for some   nice enough  functions $ \eta:\XXX\to\RR$, $p:\TX\to\RR$ where $\TT\subset\ii$ is a finite union of  intervals and $ \theta_s$ with $s$ running through a finite subset $\SS= \left\{s_k; 1\le k\le K\right\} \subset (0,1)$.  By ``nice enough'' it is meant that all the   objects and equations built upon $R, \eta$, $p$ and $\theta$ to be encountered  in this section are meaningful or admit solutions and that the functions $\eta,$ $ \theta$ and $p$ are such that for any $x\in\XX,$ the function $ \psi^x:\iX\to\RR$ specified by \eqref{eq-35} below is well-defined, $C^2$ in space and piecewise $C^1$ in time.
\end{definition}

\begin{remarks}\label{rem-01}\ \begin{enumerate}[(a)]

\item
The path measure $P$ defined by \eqref{eq-32} is the solution of a Bredinger problem of the form
\begin{equation}\label{eq-51}
H(Q|R)\to \textrm{min};\qquad Q\in\MO: (Q_t=\mu_t,\  \forall t\in \SS\cup \TT ),\quad Q _{01}=\pi.
\end{equation}

\item
In the important case where the Bredinger problem is \eqref{eq-14}, i.e. the marginal constraint is the incompressibility condition: $P_t=\vol$ for all $t\in\ii,$ we know by Theorem \ref{res-08} that the solution has the form \eqref{eq-28}:
$P=\exp (A(X)+ \eta(X_0,X_1))\, R.$ But
we did not succeed in proving that  the additive functional writes as 
$A(X)=\Iii p(t,X_t)\, dt$ for some \emph{function}  $p$. 

\item
 The expression \eqref{eq-32} corresponds to a measure $ \alpha(dt) =\1 _{\{ t\in\TT\}}\, dt+\sum _{ s\in\SS} \delta_s(dt)$ in \eqref{eq-26}.
\end{enumerate}\end{remarks}

\subsection*{The kinematics of $P$}
While the dynamics of $P$ is specified by formula \eqref{eq-32} which is expressed in terms of potentials $p$ and $ \theta,$ its kinematic description  is specified by the velocity vector field $\vf^P$ appearing in  the stochastic differential equation
\begin{align*}
dX_t=\vf^P_t\,dt+dM^P_t,\qquad P\as
\end{align*}
where $M^P$ is a local $P$-martingale.
We are going to calculate  $\vf^P$ in terms of the potentials $ \theta$ and $p$. This will permit us to establish  some connection between $P$ and the Navier-Stokes equation \eqref{eqac-04} in the special case where $R$ is the Brownian path measure with diffusion constant $a>0$.

\begin{lemma}
The velocity vector field is of the form
\begin{align}\label{eq-33}
\vf^P_t=a \beta^P_t,\quad P\as
\end{align} 
with $ \beta^P$  predictable satisfying $P$-almost surely
\begin{align}\label{eq-34}
\beta^P_t\scal dX_t
	- a/2\   |\beta^P_t|^2\, dt
	=\1 _{ \left\{t\in\SS\right\} } \theta_t+p_t\, dt +d \psi ^{ X_0}_t(X_t),\quad \forall 0\le t\le1,
\end{align}
where  we have set for any $0\le t\le 1$ and $x\in\XX,$
\begin{align}\label{eq-35}
\psi^x(t,z):=\log E_R \Big[ \exp \Big( \eta(x,X_1)+ \sum _{ s\in\SS, s>t} &\theta_s(X_s)\nonumber\\
	&+ \int _{\TT\cap (t,1]}p(r,X_r)\,dr\Big)\,\Big|\, X_t=z \Big],\quad R_t\as 
\end{align}
Note that for $t=1,$ we have $ \psi^x(1,\cdot)=\eta(x,\cdot).$ 
\end{lemma}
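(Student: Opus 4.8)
The plan is to reduce everything to the conditioned Markov measures $P^x:=P(\cdot\mid X_0=x)$. Since $P$ is reciprocal by Theorem~\ref{res-08} and given by \eqref{eq-32}, for $\mu_0$-a.e.\ $x$ the measure $P^x$ is Markov with $dP^x/dR^x=Z(x)^{-1}\exp\big(\eta(x,X_1)+\sum_{s\in\SS}\theta_s(X_s)+\int_{\TT}p(r,X_r)\,dr\big)$, where $Z(x)$ is the normalization. First I would record the $h$-transform structure of $P^x$: using the Markov property of $R$ and peeling off the $\sigma(X_{[0,t]})$-measurable factors in \eqref{eq-35}, one checks that $Z(x)=\exp(\psi^x(0,x))$ and, more generally, that the process
\[
\Phi^x_t:=\psi^x(t,X_t)+\sum_{s\in\SS,\ s\le t}\theta_s(X_s)+\int_{\TT\cap[0,t]}p(r,X_r)\,dr
\]
satisfies $\exp(\Phi^x_t)=E\big[\exp(\Phi^x_1)\mid X_{[0,t]}\big]$ under $R^x$. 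Since $\psi^x(1,\cdot)=\eta(x,\cdot)$, this says precisely that $\exp(\Phi^x_\cdot)$ is a positive $R^x$-martingale and that $\exp(\Phi^x_t-\Phi^x_0)$ is the density process of $P^x$ with respect to $R^x$ on $\sigma(X_{[0,t]})$.

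Next I would compute the semimartingale decomposition of $\Phi^x$ under $R^x$. On each of the finitely many time-subintervals where $\psi^x$ is $C^1$ in time and $C^2$ in space — the content of ``nice enough'' in Definition~\ref{def-02} — Itô's formula together with $d\langle X^i,X^j\rangle_t=a\,\delta_{ij}\,dt$ gives $d[\psi^x(t,X_t)]=(\partial_t\psi^x+\tfrac a2\Delta\psi^x)(t,X_t)\,dt+\nabla\psi^x(t,X_t)\cdot dX_t$. At each $s\in\SS$ the definition \eqref{eq-35} forces $\psi^x(s-,z)=\theta_s(z)+\psi^x(s,z)$, so the jump of $\psi^x(t,X_t)$ exactly cancels the upward jump of $\sum_{s'\le t}\theta_{s'}(X_{s'})$; hence $\Phi^x$ is a \emph{continuous} semimartingale whose martingale part is $N^x_t:=\int_0^t\nabla\psi^x(r,X_r)\cdot dX_r$, with $\langle N^x\rangle_t=a\int_0^t|\nabla\psi^x(r,X_r)|^2\,dr$. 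Because $\exp(\Phi^x-\Phi^x_0)$ is an $R^x$-martingale it must equal the stochastic exponential $\mathcal E(N^x)$, i.e.\ the finite-variation part of $\Phi^x$ equals $-\tfrac12\langle N^x\rangle$; spelled out with the explicit form of $\Phi^x$ this is the identity
\begin{multline*}
\psi^x(t,X_t)-\psi^x(0,x)+\sum_{s\in\SS,\ s\le t}\theta_s(X_s)+\int_{\TT\cap[0,t]}p(r,X_r)\,dr\\
=\int_0^t\nabla\psi^x(r,X_r)\cdot dX_r-\frac a2\int_0^t|\nabla\psi^x(r,X_r)|^2\,dr,
\end{multline*}
valid for all $t$, both $R^x$- and $P^x$-a.s.

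Finally I would invoke Girsanov's theorem: under $R^x$ one has $X_t=x+M_t$ with $M$ continuous and $d\langle M^i,M^j\rangle_t=a\,\delta_{ij}\,dt$, and with density process $\mathcal E(N^x)$ the $P^x$-drift is shifted by $d\langle M,N^x\rangle_t$, giving $dX^i_t=a\,\partial_i\psi^x(t,X_t)\,dt+dM^{P^x,i}_t$ under $P^x$, i.e.\ $\vf^{P^x}_t=a\,\nabla\psi^x(t,X_t)$. Disintegrating $P=\int_\XX P^x\,\mu_0(dx)$ and using that the drift is a predictable functional of the path, I obtain $\vf^P_t=a\,\nabla\psi^{X_0}(t,X_t)$, $P$-a.s.; this is \eqref{eq-33} with $\beta^P_t:=\nabla\psi^{X_0}(t,X_t)$, which is predictable by the regularity assumption. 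Substituting $\beta^P=\nabla\psi^{X_0}$ (with $x=X_0$) into the displayed identity above and reading it in differential form is exactly \eqref{eq-34}. The step I expect to be the main obstacle is the regularity bookkeeping: justifying Itô's formula across the finitely many instants where $\psi^x$ is only piecewise $C^1$ in time, verifying there that the jumps of $\psi^x(t,X_t)$ and of $\sum_s\theta_s(X_s)$ cancel, and passing from the local-martingale statement to a genuine martingale so that Girsanov applies — all of which is precisely where the ``nice enough'' hypotheses of Definition~\ref{def-02} are used.
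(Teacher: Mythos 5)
Your argument is correct, and it shares the paper's skeleton — disintegration along $X_0$ so as to work with the Markov measures $P^x$, and the Markov property of $R$ giving the conditional Feynman--Kac form $\exp\big(\sum_{s\le t}\theta_s(X_s)+\int_{\TT\cap[0,t]}p\,dr+\psi^x(t,X_t)-\psi^x(0,x)\big)$ for the restricted density $dP^x_{[0,t]}/dR^x_{[0,t]}$ (your normalization $Z(x)=e^{\psi^x(0,x)}$ is the careful version of what the paper writes up to a constant). Where you diverge is in how \eqref{eq-34} is extracted. The paper keeps the Girsanov drift abstract: it writes the same restricted density as $\exp\big(\int_0^t\beta^x_r\cdot dX_r-\tfrac a2\int_0^t|\beta^x_r|^2dr\big)$ with an unidentified predictable $\beta^x$, equates the two exponents, and reads off \eqref{eq-34} by taking differentials; the identification $\beta^x=\nabla\psi^x$ is deliberately postponed to Theorem \ref{res-15}, where it is obtained by Itô plus uniqueness of the semimartingale (Doob--Meyer) decomposition. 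You instead identify the drift first: Itô on $\psi^x$, cancellation of the jumps of $\psi^x(t,X_t)$ against those of $\sum_{s\le t}\theta_s(X_s)$, the observation that the martingale density must be the stochastic exponential $\mathcal E\big(\int\nabla\psi^x\cdot dX\big)$, and Girsanov, giving $\vf^P_t=a\nabla\psi^{X_0}_t(X_t)$ directly, from which \eqref{eq-34} follows by substitution. This is logically sound and in fact proves the lemma together with the drift identification of Theorem \ref{res-15} (and, through the finite-variation identity, essentially the Hamilton--Jacobi--Bellman equation \eqref{eq-40}) in one stroke; the price is that you invoke the full $C^2$-in-space, piecewise-$C^1$-in-time regularity of $\psi^x$ already at this stage, whereas the paper's proof of the lemma needs only that the two exponential representations of the density process coincide so that their differentials can be compared, with all regularity-dependent work deferred. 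Since Definition \ref{def-02} puts that regularity in force throughout the section, your front-loading is harmless here; just note that, as stated, the lemma asks only for \emph{some} predictable $\beta^P$ satisfying \eqref{eq-33}--\eqref{eq-34} (coming from Girsanov theory under finite entropy), so producing the explicit one is more than is required. A last cosmetic point: you correctly carry the indicator $\1_{\{t\in\TT\}}$ in front of $p_t\,dt$, matching the paper's \eqref{eq-39}, even though the displayed \eqref{eq-34} omits it.
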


The identity \eqref{eq-34} is the keystone of the computation of $\vf^P$. It relates  kinetic terms on the left-hand side with dynamical terms on the right-hand side.

\begin{proof}
Our calculation of  $\vf^P$ is done by confronting the Feynman-Kac type formula \eqref{eq-32} with the expression 
\begin{align}\label{eq-36}
\frac{dP}{dR}= \frac{dP_0}{dR_0}(X_0)
	\exp \Big(\Iii \beta^P_t\cdot dX_t
	- \frac{a}{2} \Iii |\beta^P_t|^2\, dt\Big),\quad P\as
\end{align}
issued from Girsanov's theory, where $ \beta^P$ is a predictable vector field.
To perform this identification, two operations are required.
\begin{enumerate}[(i)]
\item
We disintegrate $R$ and $P$ along their initial positions, i.e.\ $R=\int _{ \Rn}R^x(\cdot)\,R_0(dx)$ and $P=\int _{ \Rn}P^x(\cdot)\,P_0(dx)$, where for any $x\in\Rn,$ we denote $Q^x(\cdot)=Q^x(\cdot\mid X_0=x).$ The main advantage of this disintegration is that it allows to work with the Markov measures  $P^x$, while $P$ is only reciprocal, see \cite{BL16}.

\item
The expressions \eqref{eq-32} and \eqref{eq-36} of $dP/dR$ are not enough.
We shall need for each $0\le t\le 1,$  formulas for the Radon-Nikodym density
\begin{align}\label{eq-37}
\frac{dP^x _{ [0,t]}}{dR^x _{ [0,t]}}=E _{ R^x} \Big( \frac{dP^x}{dR^x}\mid X _{ [0,t]}\Big) 
\end{align} 
of the restrictions  to the $ \sigma$-field $\sigma(X _{ [0,t]}).$
\end{enumerate}

As regards (i), denoting $b^x$  the drift field of the Markov measure $P^x,$ since  $dX_t=\vf^P_t\,dt+M^P_t,\ P\as$  and for $P_0$-almost all $x$ in restriction to $ \left\{X_0=x\right\} $ we have: 
\begin{align*}
dX_t=b^x_t\,dt+dM ^{ P^x}_t=\vf^P_t\,dt+dM^P_t=\vf^P_t\,dt+dM ^{ P^x}_t,\ P^x\as,
\end{align*} 
 we see that $\vf^P$ has the special form
\begin{align*}
\vf^P_t=b ^{ X_0}_t,\ \textrm{ for a.e.\ }t,\quad P\as
\end{align*}
This shows that it is enough to obtain the drift $b^x$ of  the Markov measure $P^x$ for any $x$.

As regards (ii), Girsanov's representation \eqref{eq-36} becomes for all $0\le t\le 1,$
\begin{align*}
\frac{dP^x _{ [0,t]}}{dR^x _{ [0,t]}}
	=\exp \Big(\int_0^t \beta^x_r\cdot dX_r
	- \frac{a}{2} \int_0^t |\beta^x_r|^2\, dr\Big),\quad P^x\as
\end{align*}
with
\begin{align}\label{eq-38}
b^x_t=a \beta^x_t,\quad P^x\as
\end{align} 
for some predictable process $ \beta^x.$
On the other hand,  with \eqref{eq-32}, \eqref{eq-37} and the Markov property of $R,$ we obtain
\begin{align*}
\frac{dP^x _{ [0,t]}}{dR^x _{ [0,t]}}
	=\exp \Big( \sum _{ s\in\SS, s\le t} \theta_s(X_s)
	+\int _{\TT\cap [0,t]} p(r,X_r)\,dr+ \psi^x_t(X_t) \Big) ,\quad R^x\as
\end{align*}
where $ \psi$ is defined at \eqref{eq-35}.
Comparing the differentials of the two expressions of $dP^x _{ [0,t]}/dR^x _{ [0,t]},$ we arrive at
\begin{equation}\label{eq-39}
\beta^x_t\cdot dX_t
	- a/2\  |\beta^x_t|^2\, dt
	=\1 _{ \left\{t\in\SS\right\} } \theta_t
	+ \1 _{ \left\{t\in\TT\right\} }p_t\, dt +d \psi^x_t(X_t),\quad \forall 0\le t< 1,\ P^x\as
\end{equation}
which is \eqref{eq-34}.
\end{proof}

\begin{theorem}\label{res-15}
Let us assume that the dual parameters $\eta,$ $ \theta$ and $p$ are such that $P$ is  regular in the sense of Definition \ref{def-02}. Then,
\begin{align*}
\vf^P_t(X _{ [0,t]})=\vf^P_t(X_0,X_t)=a\nabla \psi ^{ X_0}_t(X_t),\quad \forall 0\le t\le 1,\ P\as,
\end{align*}
where $\nabla\psi ^{ X_0}_t(X_t)$ stands for $\nabla_z \psi^x(t,z) _{ |x=X_0, z=X_t}$ and for any $x\in\XX,$ $ \psi^x$ is given by \eqref{eq-35}.

Moreover, for any $x\in\XX,$ $ \psi^x$  is a classical solution of the second-order Hamilton-Jacobi equation
\begin{equation}\label{eq-40}
\left\{
\begin{array}{ll}
\big[( \partial_t+a \Delta/2)\psi
	+a\ |\nabla \psi|^2/2+\1 _{ \left\{t\in\TT\right\} }p\big](t,z)=0,\qquad	
	 &0\le t<1,\ t\not\in\SS, z\in\XX,\\
 \psi(t,\cdot)- \psi(t^-,\cdot)=- \theta(t,\cdot), &t\in\SS,\\
\psi(1,\cdot)= \eta(x,\cdot), & t=1.
\end{array}
\right.
\end{equation} 
\end{theorem}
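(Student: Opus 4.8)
The plan is to exploit the previous lemma, which already reduces everything to analyzing the identity \eqref{eq-34} (equivalently \eqref{eq-39} under each $P^x$). First I would establish the Hamilton--Jacobi equation \eqref{eq-40}. The function $\psi^x$ defined by \eqref{eq-35} is, by construction, (the logarithm of) a Feynman--Kac expectation under the Brownian path measure $R$ with killing/source term $\1_{\{t\in\TT\}}p$ and with extra multiplicative jumps $e^{\theta_s}$ at the times $s\in\SS$. Applying the standard parabolic Feynman--Kac representation on each open subinterval of $(0,1)\setminus\SS$, the function $u^x:=e^{\psi^x}$ solves the linear backward equation $(\partial_t+a\Delta/2)u^x+\1_{\{t\in\TT\}}p\,u^x=0$ with terminal condition $u^x(1,\cdot)=e^{\eta(x,\cdot)}$ and the multiplicative jump conditions $u^x(s^-,\cdot)=e^{\theta_s}u^x(s,\cdot)$ at $s\in\SS$; the regularity hypothesis in Definition \ref{def-02} is exactly what guarantees this representation is classical. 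Taking logarithms and using $\Delta u/u=\Delta\psi+|\nabla\psi|^2$ turns the linear equation into the quasilinear HJB equation in \eqref{eq-40}, and the multiplicative jumps become the additive jumps $\psi(t,\cdot)-\psi(t^-,\cdot)=-\theta(t,\cdot)$; the terminal condition is immediate from \eqref{eq-35} at $t=1$, as already noted after \eqref{eq-35}.

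Next I would identify the drift. Apply Itô's formula to $t\mapsto\psi^x(t,X_t)$ under $R^x$ (legitimate since $\psi^x$ is $C^2$ in space and piecewise $C^1$ in time): on $(0,1)\setminus\SS$,
\begin{equation*}
d\psi^x_t(X_t)=\big(\partial_t\psi^x+a\Delta\psi^x/2\big)(t,X_t)\,dt+\nabla\psi^x_t(X_t)\scal\big(dX_t-\text{(}R\text{-drift)}\,dt\big),
\end{equation*}
and the $R$-drift vanishes for the centred Brownian motion, so the martingale part is $\nabla\psi^x_t(X_t)\scal dX_t$ minus its quadratic-variation correction; more precisely under $R^x$, $dX_t$ is $\sqrt a$ times Brownian increments. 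Substituting the HJB equation \eqref{eq-40} for $(\partial_t+a\Delta/2)\psi^x$ and plugging the resulting expression for $d\psi^x_t(X_t)$ into the right-hand side of \eqref{eq-39}, the deterministic $p_t\,dt$ and $\1_{\{t\in\SS\}}\theta_t$ terms cancel, leaving
\begin{equation*}
\beta^x_t\scal dX_t-\frac a2|\beta^x_t|^2\,dt=\nabla\psi^x_t(X_t)\scal dX_t-\frac a2|\nabla\psi^x_t(X_t)|^2\,dt,\quad P^x\as
\end{equation*}
Matching the martingale (i.e.\ $dX_t$) parts forces $\beta^x_t=\nabla\psi^x_t(X_t)$, $P^x$-a.s., and then \eqref{eq-38} gives $b^x_t=a\nabla\psi^x_t(X_t)$. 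Since the previous lemma's proof showed $\vf^P_t=b^{X_0}_t$, $P$-a.s., this yields $\vf^P_t(X_{[0,t]})=a\nabla\psi^{X_0}_t(X_t)$; in particular it depends on the path only through $(X_0,X_t)$, which is the reciprocal/Markov-on-bridges structure guaranteed by Theorem \ref{res-08}.

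The main obstacle is not the algebra but the justification of regularity: that the Feynman--Kac expectation \eqref{eq-35} is genuinely a classical solution of the (quasi)linear parabolic problem — finiteness and strict positivity of $u^x=e^{\psi^x}$, smoothness up to the jump times, differentiability under the expectation to interchange $\nabla_z$ with $E_R$ — and that the Itô formula applies across the finitely many jump times $s\in\SS$ where $\psi^x$ is only piecewise $C^1$ in time. All of this is absorbed into the phrase ``nice enough'' in Definition \ref{def-02}, so in the proof I would invoke that hypothesis explicitly rather than prove integrability/regularity estimates; this is precisely the ``restrictive regularity assumption'' flagged in the section introduction and in Remark \ref{rem-01}(b). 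A secondary technical point worth a line is that matching the $dX_t$-parts of two semimartingale decompositions is legitimate because $\mathrm{span}$ of the Brownian increments determines the integrand $R^x$-a.s.; this is the standard uniqueness of the Doob--Meyer/Girsanov decomposition.
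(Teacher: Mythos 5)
Your proposal is correct, and it reaches both conclusions, but it organizes the argument differently from the paper on one of the two points. For the drift identification you do essentially what the paper does: start from \eqref{eq-39} of the preceding lemma, expand $t\mapsto\psi^x(t,X_t)$ by Itô, and invoke uniqueness of the semimartingale decomposition under $P^x$ (with the nondegenerate bracket $a\,\mathrm{Id}\,dt$) to force $\beta^x_t=\nabla\psi^x_t(X_t)$, hence $\vf^P_t=b^{X_0}_t=a\nabla\psi^{X_0}_t(X_t)$. The difference lies in how \eqref{eq-40} is obtained. The paper does not prove the PDE beforehand: it compares the two $P^x$-almost sure expressions of $d\psi^x_t(X_t)$ (the one coming from \eqref{eq-39} and the Itô one) and reads off simultaneously the drift, the jump condition and the Hamilton--Jacobi equation from the Doob--Meyer identification; the equation then holds $dt\,dP^x$-a.e.\ along the canonical process and is upgraded to every $z\in\XX$ by the regularity assumption (together with the full support of the marginals). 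You instead derive \eqref{eq-40} first and independently, via the Hopf--Cole substitution $u^x=e^{\psi^x}$: by \eqref{eq-35} and the Markov property of $R$, $u^x$ is a Feynman--Kac expectation with potential $\1_{\left\{t\in\TT\right\}}p$ and multiplicative jumps $e^{\theta_s}$, so it solves the linear backward equation, and taking logarithms yields the HJB equation with its jump and terminal conditions; the $p$ and $\theta$ terms then cancel in \eqref{eq-39}, leaving only the martingale matching. Both routes lean on the same ``nice enough'' clause of Definition \ref{def-02}, but note that in your version it must also cover the passage from the conditional-expectation formula \eqref{eq-35} to a classical solution of the linear parabolic problem (a clean way is the martingale property of $u^x(t,X_t)\exp\big(\int_0^t \1_{\left\{r\in\TT\right\}}p(r,X_r)\,dr\big)$ under $R^x$ combined with Itô, which is in spirit the paper's computation in disguise), whereas the paper only needs the Itô expansion of the assumed-smooth $\psi^x$, the PDE coming out as a by-product. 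Your way buys a cleaner logical separation (dynamics of $\psi$ from the representation, kinematics from the Girsanov matching); the paper's way is more economical, extracting drift, jumps and PDE from a single semimartingale identification.
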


\begin{proof}
Let us work $P^x$-almost surely with $X_0=x$ fixed and by means of \eqref{eq-38}, rewrite \eqref{eq-39} as
\begin{align*}
d \psi^x_t(X_t)=
	\beta^x_t\cdot dM ^{ P^x}_t+(a\ | \beta^x_t|^2/2-\1 _{ \left\{t\in\TT\right\} }p_t)\,dt
	-\1 _{ \left\{t\in\SS\right\} } \theta_t,\quad P^x\as
\end{align*}
where Girsanov's theory ensures that
$
dM ^{ P^x}_t=dX_t-b^x_t\,dt
$
is the increment of a local $P^x$-martingale. We see that  $t\mapsto \psi^x_t(X_t)$ is a $P^x$-semimartingale. 
\\
Our regularity assumption ensures that  $ \psi^x$ defined at \eqref{eq-35}  verifies the following Itô formula
\begin{align*}
d \psi_t^x(X_t)&= [\psi^x_t- \psi^x _{ t^-}](X_t)+ \nabla \psi^x_t(X_t)\cdot dX_t+\big( \partial_t+ a \Delta \psi^x_t/2\big)(X_t)\,dt,
	\quad R^x\as \\
	&=[\psi^x_t- \psi^x _{ t^-}](X_t)+ \nabla \psi^x_t(X_t)\cdot dM ^{ P^x}_t+\big( \partial_t+ a \Delta \psi^x_t/2\big)(X_t)\,dt\\
	&\hskip 7.7cm +a|\nabla \psi^x_t|^2(X_t)\,dt,
	 \quad P^x\as 
\end{align*}
The  uniqueness of the decomposition of a semimartingale (Doob-Meyer  theorem) allows for  the identification of the previous two $P^x$-almost sure expressions of $ d \psi^x_t(X_t)$ and gives
\begin{align*}
 &\beta^x_t=\nabla \psi^x_t(X_t)\\
& -\1 _{ \left\{t\in\SS\right\} } \theta_t(X_t)=[\psi^x_t- \psi^x _{ t^-}](X_t)\\
& a | \beta^x_t|^2/2-\1 _{ \left\{t\in\TT\right\} }p_t(X_t)=\big(\partial_t+ a \Delta \psi^x_t/2\big)(X_t)+a|\nabla \psi^x_t|^2(X_t)
\end{align*}
where the first and third equalities hold $dt dP^x$-almost everywhere.  The second one is valid $P^x\as$ and we implicitly identified the  semimartingale $ \psi^x_t(X_t)$ with its càdlàg modification.
As $ \psi^x$ is assumed to be regular  for any $x$, we obtain
\begin{align*}
\beta^P_t( \omega)
	=\nabla \psi ^{ \omega_0}_t( \omega_t),&  &&t\in\ii,\ \omega\in\OO\\
[\psi^x_t- \psi^x _{ t^-}](z)
	=- \theta(t,z),&  &&t\in \mathcal{S},\  x,z\in\XX,\\
( \partial_t+a \Delta/2)\psi^x(t,z)+a |\nabla \psi^x|^2(t,z)/2+\1 _{ \left\{t\in\TT\right\} }p(t,z)
	=0,&  &&t\in[0,1)\setminus\SS,\ x,z\in\XX.
\end{align*}
This completes the proof of the theorem.
\end{proof}

\subsection*{Fluid evolution}

We wish to relate the stochastic velocity field $\vf^P$ with some evolution equation looking like Navier-Stokes equation \eqref{eqac-04}. It is well known in classical mechanics that taking the gradient of Hamilton-Jacobi equation leads to the second Hamilton equation (Newton's equation). Let us do it with the second order Hamilton-Jacobi equation \eqref{eq-40}. For the PDE part, when $ t\not\in\SS$ is not an instant of shock, denoting for any $x\in\XX$ the forward velocity of $P(\cdot\mid X_0=x)$ by  
$$
\vvf{x}{}:=\vf ^{ P(\cdot\mid X_0=x)}=a\nabla \psi^x,
$$ 
we obtain
\begin{align*}
0	=\nabla[ \partial_t \psi^x+a/2\  |\nabla \psi^x|^2+a/2\ \Delta\psi^x+ p]
	&= \partial_t\nabla \psi^x+a\nabla \psi^x\cdot \nabla (\nabla \psi^x)
		+a/2\ \Delta \nabla \psi^x +\nabla p\\
	&= ( \partial_t+\vvf{x}{}\cdot\nabla)\nabla \psi^x+  \Delta( \vvf{x}{})/2+\nabla p
\end{align*}
and multiplying by $a$, we see that
\begin{align*}
\left\{
\begin{array}{ll}
( \partial_t+\vvf{x}{}\cdot\nabla) (\vvf{x}{})= -a \Delta(\vvf{x}{})/2- \nabla p',\qquad & t<1,\ t\not\in\SS,\\
\vvf{x}t-\vvf{x}{ t^-}=-\nabla \theta_t,& t\in\SS,\\
 \vvf{x}1=\nabla _{ y} \eta(x,\cdot) , & t=1,
\end{array}
\right.
\end{align*}
with $p'=ap.$ The left-hand side is the convective acceleration $ \mathrm{D}_t(\vvf{x}{})$ as in Navier-Stokes equation, but besides the gradient $-\nabla p'$ of a pressure in the right-hand side, we have $-a/2\ \Delta\vvf{x}{}$ with the \emph{wrong sign}. 

The forward velocity $\vf^P$ of $P$ does not fulfill our hopes. But we   are going to see that its backward velocity $\vb^P$ does. 
Recall \eqref{eq-11} and \eqref{eq-12} for the definitions of these velocities. 
Let us introduce
$$
\vvb{y}{}:=\vb ^{ P(\cdot\mid X_1=y)}
$$
and for any $0\le \alpha\le 1,$
\begin{align}\label{eq-41}
v ^{ \alpha}_t(z)
	:= E_P \big[(1- \alpha)\vvf{X_0}t + \alpha\vvb{X_1}t\mid X_t=z\big]
	=(1- \alpha)\vf_t(z)+ \alpha\vb_t(z)
\end{align}
with
\begin{align*}
\vf_t(z):=E_P[ \vvf{X_0}t\mid X_t=z]=\IRn \vvf xt\, P(X_0\in dx\mid X_t=z)\\
\vb_t(z):=E_P[ \vvb{X_1}t\mid X_t=z]=\IRn \vvb yt\, P(X_1\in dy\mid X_t=z)
\end{align*}
the average forward and backward velocities. In particular,  $ \alpha =1/2$ corresponds to the current velocity
\begin{align*}
v^{\mathrm{cu}}_t:=\big(\vf_t+\vb_t\big)/2=v ^{ \alpha=1/2}.
\end{align*}
The reason for calling $v ^{ \alpha=1/2}$ the current velocity is that, among all the $v ^{ \alpha}$'s,  it is the only  one satisfying the continuity equation \eqref{eq-43} below, see \eqref{eq-50}.

\begin{theorem}
For any $y\in\XX$, the backward velocity field  $\vvb{y}{}$  of $P(\cdot\mid X_1=y)$ solves 
\begin{align}\label{eq-42}
\left\{
\begin{array}{ll}
( \partial_t+\vvb{y}{}\cdot\nabla) \vvb{y}{}= a \Delta\vvb{y}{}/2- \nabla p',\qquad & t>0,\ t\not\in\SS,\\
\vvb{y}t-\vvb{y}{ t^-}=\nabla \theta_t,& t\in\SS,\\
 \vvb{y}0=-\nabla _{ x} \eta(\cdot,y) , & t=0,
\end{array}
\right.
\end{align}
with $p'=ap$ and $\eta^y=\eta(\cdot,y).$ 
On the other hand, the current velocity $v^{\mathrm{cu}}$ satisfies the continuity equation
\begin{align}\label{eq-43}
\partial_t  \mu+\nabla\scal ( \mu v^{\mathrm{cu}})=0.
\end{align}
Moreover, 
\begin{align*}
\vvb y{}_t(z)=a\nabla \varphi^y_t(z),
\quad t\not\in \mathcal{S},
\end{align*}
where $ \varphi^y$ solves the Hamilton-Jacobi-Bellman equation
\begin{equation}\label{eq-44}
\left\{
\begin{array}{ll}
( \partial_t-a \Delta/2) \varphi
	+a |\nabla  \varphi|^2/2+p=0,\qquad
	 &t>0,\ t\not\in\SS,\\
 \varphi(t,\cdot)- \varphi(t^-,\cdot)=\theta(t,\cdot), &t\in\SS,\\
\varphi(0,\cdot)= -\eta(\cdot,y), & t=0.
\end{array}
\right.
\end{equation} 
\end{theorem}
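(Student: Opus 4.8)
The strategy is to reduce everything to the forward picture already established in Theorem~\ref{res-15} by exploiting the time-symmetry of the reciprocal measure $P$. First I would apply the time-reversal $X^*_t:=X_{1-t}$ and observe that $P^*:=(X^*)\pf P$ is again of the Feynman–Kac form \eqref{eq-32}, now with the roles of $X_0$ and $X_1$ swapped: conditioning on $X_1=y$ under $P$ becomes conditioning on the initial point $y$ under $P^*$, and the additive functional $\int_\TT p(t,X_t)\,dt$ is transformed into $\int_{\TT^*}p(1-t,X_t)\,dt$ while each shock term $\theta_s(X_s)$ moves to time $1-s$. Since $R$ is reversible, $R^*=R$, so $P^*$ is exactly the solution of a Bredinger problem of the same shape with reversed data. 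Running Theorem~\ref{res-15} for $P^*$ and translating back then yields $\vvb{y}{}_t(z)=a\nabla\varphi^y_t(z)$ with $\varphi^y(t,z):=\psi^{*,y}(1-t,z)$, and the forward Hamilton–Jacobi equation \eqref{eq-40} for $\psi^{*,y}$ becomes, after the substitution $t\mapsto 1-t$ (which flips the sign of $\partial_t$ but not of $\Delta$ or of the quadratic term), precisely the backward equation \eqref{eq-44}; the shock and terminal conditions transform in the stated way because a jump $\psi(t,\cdot)-\psi(t^-,\cdot)=-\theta(t,\cdot)$ becomes, read backward in time, $\varphi(t,\cdot)-\varphi(t^-,\cdot)=+\theta(t,\cdot)$.

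Next I would obtain the Newton-type equation \eqref{eq-42} by taking the spatial gradient of \eqref{eq-44}, exactly as was done for the forward velocity after Theorem~\ref{res-15}. Writing $\vvb{y}{}=a\nabla\varphi^y$ and differentiating $(\partial_t-a\Delta/2)\varphi+a|\nabla\varphi|^2/2+p=0$ gives $\partial_t\nabla\varphi-a\Delta\nabla\varphi/2+a\nabla\varphi\cdot\nabla(\nabla\varphi)+\nabla p=0$; multiplying by $a$ and regrouping the convective derivative $(\partial_t+\vvb{y}{}\cdot\nabla)\vvb{y}{}$ produces $(\partial_t+\vvb{y}{}\cdot\nabla)\vvb{y}{}=a\Delta\vvb{y}{}/2-\nabla p'$ with $p'=ap$. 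The crucial point is that here the Laplacian term enters with the \emph{correct} Navier–Stokes sign: the backward heat operator $\partial_t-a\Delta/2$ in \eqref{eq-44} is what produces $+a\Delta\vvb{y}{}/2$, in contrast with the forward computation where $\partial_t+a\Delta/2$ gave the wrong sign. The boundary terms follow by differentiating the shock and terminal conditions of \eqref{eq-44} in the space variable.

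For the continuity equation \eqref{eq-43} I would use the general fact, valid for any diffusion-type path measure, that the one-time marginal flow $t\mapsto P_t=\mu_t$ satisfies both a forward Fokker–Planck equation $\partial_t\mu+\nabla\scal(\mu\vf)=a\Delta\mu/2$ written with the forward drift $\vf_t=E_P[\vvf{X_0}t\mid X_t=\cdot]$, and a backward one $\partial_t\mu+\nabla\scal(\mu\vb)=-a\Delta\mu/2$ written with the backward drift $\vb_t=E_P[\vvb{X_1}t\mid X_t=\cdot]$ (this is Nelson's duality between forward and backward Kolmogorov equations, and is where the definition \eqref{eq-41} of the averaged velocities is needed so that the conditional expectations turn the $x$- and $y$-indexed drifts into genuine vector fields on $\XX$). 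Averaging these two identities makes the $\pm a\Delta\mu/2$ terms cancel and leaves $\partial_t\mu+\nabla\scal(\mu\,(\vf+\vb)/2)=0$, i.e.\ \eqref{eq-43} for $v^{\mathrm{cu}}=(\vf+\vb)/2$; this also justifies the remark that $\alpha=1/2$ is the unique choice in \eqref{eq-41} for which $v^\alpha$ is divergence-compatible with $\mu$.

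The main obstacle is not the algebra but the \emph{regularity bookkeeping} under time reversal. One must check that the function $\varphi^y$ obtained by reversing $\psi^{*,y}$ is again a classical ($C^2$ in space, piecewise $C^1$ in time) solution, i.e.\ that the notion of "regular solution" in Definition~\ref{def-02} is stable under the transformation $t\mapsto 1-t$ together with the swap of the roles of $\eta(\cdot,\cdot)$'s two arguments and of $\theta$'s jumps; and one must handle the conditioning $P(\cdot\mid X_1=y)$ carefully, invoking Remark after Definition~\ref{def-01bis}(b) that $P(\cdot\mid X_1)$ is Markov so that Girsanov's representation and the Doob–Meyer identification used in the proof of Theorem~\ref{res-15} are legitimate for $\vvb{y}{}$. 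Once these points are granted, the substitution $t\mapsto 1-t$ does all the work and the three displayed equations in the theorem follow from their forward analogues.
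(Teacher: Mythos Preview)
Your approach is essentially identical to the paper's: time-reverse $P$, use $R^*=R$ to recognize $P^*$ as a regular solution of the same form with data $(\eta^*,\theta^*,p^*)$, apply Theorem~\ref{res-15} to $P^*$, then translate back; take the spatial gradient of \eqref{eq-44} to get \eqref{eq-42}; and derive \eqref{eq-43} by averaging the forward and backward Fokker--Planck equations for $\mu_t=P_t$. The paper carries this out in exactly this order.

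There is one algebraic slip you should fix. You set $\varphi^y(t,z):=\psi^{*,y}(1-t,z)$, but with Nelson's convention (stated in the paper) one has $\vvb{y}{t}=-\,\vf^{\,P^*(\cdot\mid X_0=y)}_{\,1-t}\circ X^*$, so the correct definition is $\varphi^y(t,\cdot):=-\,\psi^{*,y}(1-t,\cdot)$. Without the minus sign, your $\varphi^y$ does \emph{not} satisfy \eqref{eq-44}: the substitution $t\mapsto 1-t$ alone gives $(\partial_t-a\Delta/2)\varphi-a|\nabla\varphi|^2/2-p=0$, with the wrong signs on the quadratic and pressure terms, and the terminal condition becomes $\varphi(0,\cdot)=+\eta(\cdot,y)$ rather than $-\eta(\cdot,y)$. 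The extra minus sign flips $\Delta\varphi$ while leaving $|\nabla\varphi|^2$ unchanged, which is exactly what is needed; it also gives $\vvb{y}{}=a\nabla\varphi^y$ with the right sign. Once this is corrected, your argument matches the paper's proof.
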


The first equation of the system \eqref{eq-42} is the desired Newton part of the Navier-Stokes equation 
(Burgers equation) with the right positive sign in front of the viscous force term: $a \Delta\vvb{y}{}/2,$
see \eqref{eqac-04}. The continuity equation \eqref{eq-43} is the analogue of $\nabla\scal v=0$ in \eqref{eqac-04} 
which corresponds to the case $\mu\equiv1$.

\begin{proof}
Introducing the time-reversed 
\begin{align*}
P^*:={X^*}\pf P
\end{align*}
of $P$,  where $X^*_t:=X _{ 1-t},$ $0\le t\le 1,$ we obtain $\vb^P_t(X _{ [t,1]})=-[\vf ^{ P^*}_{ 1-t}\circ X^*](X_{[0,1-t]}).$  On the other hand, as $P$ is reciprocal, so is $P^*$.  Consequently, the forward and backward velocities $\vf^P_t(X _{ [0,t]})=\vf^P(X_0,X_t)$ and $\vb^P_t(X _{ [t,1]})=-[\vf ^{ P^*}_{ 1-t}\circ X^*](X_{ [t,1]})=-[\vf ^{ P^*}_{ 1-t}\circ X^*](X _{t},X_1)=\vb^P_t(X_t,X_1)$  only depend on the  states $X_0,X_t,X_1$ and can  be considered simultaneously in a sum or a difference without assuming the knowledge of the whole sample path. Let us emphasize for future use the identity
\begin{align*}
\vb^P_t(X_t,X_1)=-[\vf ^{ P^*}_{ 1-t}\circ X^*](X_t,X_1),\quad 0\le t\le 1.
\end{align*}
Since $R$ is assumed to be reversible, i.e.\ $R=R^*,$ we see that $dP^*/dR=(dP/dR^*)\circ X^*=(dP/dR)\circ X^*$ and we obtain with \eqref{eq-32} that
\begin{align*}
P^*	=\exp\Big(\eta^*(X_0,X_1)+ \sum _{ s\in \SS^*} \theta^* _{ s}(X _{ s})
	+\Iii p^*(t,X _{ t})\,dt\Big)\, R
\end{align*}
with $\eta^*(x,y)=\eta(y,x)$ for all $x,y\in\XX,$  $ \theta^*_s= \theta _{ 1-s}$ for all $s\in\SS^*= \left\{1-s; s\in\SS\right\} $ and $p^*(t,\cdot)=p(1-t,\cdot)$ for all $0\le t\le 1.$ Applying Theorem \ref{res-15} to $P^*$, we see that 
\begin{align*}
\vf ^{ P^*}_t(X _{ [0,t]})=a\nabla \xi ^{ X_0}_t(X_t)
\end{align*}
with $\xi^y$ solution of
\begin{equation*}
\left\{
\begin{array}{ll}
( \partial_t+a \Delta/2) \xi
	+a |\nabla  \xi|^2/2+p^*=0,\qquad
	 &t<1,\ t\not\in\SS^*,\\
 \xi(t,\cdot)- \xi(t^-,\cdot)=- \theta^*(t,\cdot), &t\in\SS^*,\\
\xi(1,\cdot)= \eta^*(y,\cdot), & t=1.
\end{array}
\right.
\end{equation*} 
Therefore, setting 
$	
\varphi^y(t,\cdot)=- \xi^y(1-t,\cdot),
$	
we obtain
\begin{align}\label{eq-45}
\vvb{X_1}t(X_t)=a\nabla \varphi_t^{ X_1}(X_t)
\end{align}
with $ \varphi^y$ solution of \eqref{eq-44}.
Taking the gradient of this equation and multiplying by $a$, we see that for any $y\in\XX$, $\vvb{y}{}$  solves \eqref{eq-42}.

Of course the  marginal constraint cannot be verified by the velocities $\vvf{x}{}$ and $\vvb{y}{}$, since they start or arrive at a Dirac mass. One must consider averages of these fields as in \eqref{eq-41} to recover this constraint. 
For any smooth bounded functions, we have
\begin{align*}
\IRn u&\,d(P_t-P_0)
	=E_P\int_0^t \left[ \vvf{X_0}s(X _{s})\cdot \nabla u(X_s)+\ud \Delta u(X_s)\right] \,ds\\
&= E_P\int_0^t \left[ \vf_s(X _{s})\cdot \nabla u(X_s)+\ud \Delta u(X_s)\right] \,ds
	= \int_0^tds\IX [\vf_s\cdot \nabla u+\ud \Delta u](z)\,P_s(dz)
\end{align*}
and
\begin{align*}
\IRn u&\,d(P_1-P_t)
	=E_P\int_t^1 \left[ \vvb{X_1}s(X _{s})\cdot \nabla u(X_s)-\ud \Delta u(X_s)\right] \,ds\\
&= E_P\int_t^1 \left[ \vb_s(X _{s})\cdot \nabla u(X_s)-\ud \Delta u(X_s)\right] \,ds
= \int_t^1ds\IX [\vb_s\cdot \nabla u-\ud \Delta u](z)\,P_s(dz)
\end{align*}
implying 
$	
\left\langle u, \partial_t \mu \right\rangle 
	= \left\langle \vf_t\cdot \nabla u+ \Delta u/2, \mu_t \right\rangle
	= \left\langle \vb_t\cdot \nabla u- \Delta u/2, \mu_t \right\rangle.
$	
It follows that for any $0\le \alpha\le 1,$
$	
\left\langle u, \partial_t \mu_t \right\rangle 
 	= \left\langle v ^{ \alpha}_t\scal\nabla u+ a (1/2- \alpha)\Delta u , \mu_t\right\rangle
$	
which is equivalent to
\begin{align}\label{eq-50}
 \partial_t \mu + \nabla\cdot ( \mu v ^{ \alpha})
 	= a (1/2- \alpha)\Delta \mu.
\end{align}
In particular, taking $ \alpha=1/2$ leads to \eqref{eq-43} and completes the proof of the proposition.
\end{proof}

\begin{remark}[\emph{The pressure does not depend on the final position $y$}]

It is an important consequence of Theorem \ref{res-08} that the   pressure $p$ and the potential $ \theta$ do not depend on the final position $X_1.$ The only explicit appearance of $X_1$ is in the function $ \eta.$ Consequently, the pressure $p'$ in the Burgers equation \eqref{eq-42} only depends on the actual position. This means that all the fluid particles are submitted to the same pressure field $\nabla p'$ regardless of their final positions $y$. A similar remark is valid for the shock potential $ \theta.$
\end{remark}

\begin{remark}[\emph{A mixture of flows tagged by their final positions}]
The solution $P$ of Bredinger's problem is well described as the statistical mixture 
\begin{align}\label{eq-46}
P(\cdot)=\IX  \Py(\cdot)\, \mu_1(dy)
\end{align}
 where $\Py:=P(\cdot\mid X_1=y)$ admits the gradient drift field $\vvb y{}=\nabla \varphi^y$. This velocity field is completely specified by \eqref{eq-44} where the endpoint target  $y$ only occurs in the initial condition via the function $- \eta(\cdot,y).$ Formula \eqref{eq-46} is a superposition principle. Each particle ending at $y$ is subject to the gradient backward velocity field $\vvb y{}$ solving the Burgers equation \eqref{eq-42} and the volume constraint $P_t= \mu_t,$  $\forall t\in \SS\cup\TT,$ (recall \eqref{eq-51})
is recovered superposing  all the  flows tagged by their final positions, via formula \eqref{eq-46}.  This superposition phenomenon is very reminiscent of the structure of the multiphase vortex sheets model encountered in \cite{Bre97}.
\end{remark}

\begin{remark}[\emph{The average velocity is not a gradient}]

The incompressibility constraint applied to a gradient velocity field $v=\nabla \theta$ on the torus $\Tn$ reads as $0=\nabla v=\nabla\cdot \nabla \theta= \Delta \theta.$ But this implies that   $v$ vanishes everywhere. This is the reason why knowing that the average velocity is not a gradient leaves some room in our model.
\\
We know with \eqref{eq-45} that $\vvb yt(z)=\nabla_z \varphi^y(z)$ is a gradient field.  Consequently, the average backward velocity writes as
\begin{equation*}
\vb_t(z) = \int \nabla_z \varphi_t^y(z)\,P_1 ^{ tz}(dy)
\end{equation*}
and we see that the dependence on $z$ of $P_1 ^{ tz}:=P(X_1\in dy\mid X_t=z)$ prevents us from identifying $\vb_t(z) $ with
$  \nabla_z[\int \varphi^y_t(z)\,P_1 ^{ tz}(dy)].$
Introducing the average potential
\begin{equation*}
\varphi^P_t(z):=\int \varphi_t ^{ y}(z)\, P_1 ^{ tz}(dy),
\end{equation*}
we obtain 
\begin{equation*}
\vb_t(z)=\nabla  \varphi_t^P(z)- \int  \varphi_t^y(z)\,\nabla_z P_1 ^{ tz}(dy).
\end{equation*}
\end{remark}

\section{Existence of a solution  on $\Tn$}\label{sec-existence}

We are going to prove  a sufficient condition of existence of a solution of the Bredinger problem \eqref{eq-14} in the special important  case where  the reference path measure $R\in\PO$ is the reversible Brownian motion on the  the flat torus $ \XX=\Tn$ and $ \mu_t=\vol,$ for all $t$. We refer to this problem as
\begin{equation}\label{eq-47}
H(P|R)\to \textrm{min}; [P_t=\vol, \forall 0\le t\le 1],\ P _{ 01}= \pi.
\tag{H$ _{ \Tn}$}
\end{equation}  
It is an adaptation of a result in \cite{Bre89} of existence of a generalized incompressible flow in $\Tn$. The specific property of the reversible Brownian motion $R$ is the translation invariance 
\begin{equation}\label{eq-48}
R =R (x+\cdot ),\quad \forall x\in\Tn.
\end{equation}
Combined with the translation invariance of $\vol$ (which is implied by \eqref{eq-48}), this will lead us to the desired result.
All we have to  find is some path measure $Q\in\PO$ which satisfies the constraints  $[Q_t=\vol, \forall 0\le t\le 1],$ $Q _{ 01}= \pi$ and such that $H(Q|R)< \infty.$ The path measure of interest is 
$$
Q=\int _{ \XX^3} R(\cdot\mid X_0=x,X _{ 1/2}=z,X_1=y)\, \gamma(dxdzdy)
$$
with 
$ \gamma(dx dz dy)=\pi(dxdy)\vol (dz)$ in $ \mathrm{P}(\XX^3).$

\begin{proposition}\label{res-05}
The path measure $Q$ satisfies the constraints  $[Q_t=\vol, \forall 0\le t\le 1]$ and $Q _{ 01}= \pi$. If $H( \pi| R _{ 01})< \infty,$ then $H(Q|R)< \infty.$ 
\end{proposition}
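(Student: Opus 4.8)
The plan is to verify the two marginal constraints directly from the definition of $Q$ using the translation invariance \eqref{eq-48} of $R$, and then to bound $H(Q|R)$ by $H(\pi|R_{01})$ using the disintegration/additivity formula \eqref{eq-02} for relative entropy together with the fact that the bridges of $R$ are shared by $Q$ and $R$.

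First I would compute the time marginals $Q_t$. Since $Q$ is a mixture of the $R$-bridges $R(\cdot\mid X_0=x,X_{1/2}=z,X_1=y)$ against $\gamma(dxdzdy)=\pi(dxdy)\,\vol(dz)$, for $t\le 1/2$ one has $Q_t = \int R(X_t\in\cdot\mid X_0=x,X_{1/2}=z)\,\pi_0(dx)\vol(dz)$, and similarly for $t\ge 1/2$ using $(X_{1/2},X_1)$. Here one uses that conditionally on $(X_0,X_{1/2},X_1)$ the law of $X_t$ for $t\le 1/2$ depends only on $(X_0,X_{1/2})$, by the Markov property of $R$. Because $R$ is translation invariant, the $R$-bridge between $x$ and $z$ is the translate of the bridge between $0$ and $z-x$; integrating $z$ against the translation-invariant measure $\vol$ (which, as noted after \eqref{eq-48}, is forced by \eqref{eq-48}) washes out the dependence and yields $Q_t=\vol$ for every $t\in[0,1/2]$, and by the symmetric argument for every $t\in[1/2,1]$. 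The endpoint constraint $Q_{01}=\pi$ is immediate: conditioning the bridge on $(X_0,X_1)$ and integrating out $z$ against $\vol(dz)$ recovers $\pi$.

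For the entropy bound, I would apply the additive decomposition \eqref{eq-02} with the map $f=(X_0,X_{1/2},X_1):\OO\to\XX^3$. Under this map $Q$ pushes forward to $\gamma$ and $R$ pushes forward to $R_{0,1/2,1}$, while the conditional measures $Q^{f=(x,z,y)}$ and $R^{f=(x,z,y)}$ coincide (both equal the $R$-bridge through $x,z,y$), so the integral term in \eqref{eq-02} vanishes and $H(Q|R)=H(\gamma\mid R_{0,1/2,1})$. Now apply \eqref{eq-02} once more, this time inside $\XX^3$ with the projection $(x,z,y)\mapsto(x,y)$: the push-forwards are $\pi$ and $R_{01}$, so
\begin{align*}
H(\gamma\mid R_{0,1/2,1})
  = H(\pi\mid R_{01}) + \int_{\XXX} H\!\left(\gamma^{(x,y)}\,\middle|\,R_{0,1/2,1}^{(x,y)}\right)\,\pi(dxdy).
\end{align*}
Here $\gamma^{(x,y)}=\vol$ is the conditional law of $z$, and $R_{0,1/2,1}^{(x,y)}$ is the law of $X_{1/2}$ under the $R$-bridge from $x$ to $y$, which is an absolutely continuous probability measure on the torus with a bounded Gaussian-type density $r$; hence $H(\vol\mid R_{0,1/2,1}^{(x,y)})=-\int \log(\text{density})\,d\vol$ is finite and in fact bounded uniformly in $(x,y)$ (again using translation invariance to see the density depends only on $y-x$). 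Therefore $H(Q|R)\le H(\pi\mid R_{01}) + C$ for a finite constant $C$, which is finite whenever $H(\pi\mid R_{01})<\infty$.

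The main obstacle is the middle step: one must be careful that the conditional measures of $Q$ and $R$ given $(X_0,X_{1/2},X_1)$ genuinely agree, i.e.\ that building $Q$ as a mixture of $R$-bridges does not secretly disturb the bridge kernels, and that all the conditionings are well posed — this is exactly where the hypothesis that $\pi$, $\vol$ and the bridge kernels of $R$ are nice enough (on the compact torus $R\in\PO$ with positive transition density, cf.\ Assumption \ref{ass-01}) is used to guarantee $\sigma$-finiteness of the relevant push-forwards so that \eqref{eq-02} applies. The uniform bound on $H(\vol\mid R_{0,1/2,1}^{(x,y)})$ is then a routine Gaussian heat-kernel estimate on $\Tn$.
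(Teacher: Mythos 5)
Your proposal is correct and follows essentially the same route as the paper: verify the marginal constraints via the Markov property of $R$ and the translation invariance \eqref{eq-48} (with $\vol$ washing out the bridge dependence), then apply the entropy decomposition \eqref{eq-02} twice — first along $(X_0,X_{1/2},X_1)$, where the shared bridge kernels kill the conditional term, then along the projection onto the endpoints — to get $H(Q|R)=H(\pi|R_{01})+\int H(\vol|R^{xy}_{1/2})\,\pi(dxdy)$, and finish with a uniform heat-kernel bound on $\Tn$. This is exactly the paper's argument, which makes the last bound explicit via the Gaussian series for $dR^{xy}_{1/2}/d\vol$ and continuity plus compactness of $\Tn$.
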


\begin{corollary}\label{res-07}
The entropy minimization problem \eqref{eq-47} admits a unique solution if and only if $H( \pi|R _{ 01})< \infty$. 
\end{corollary}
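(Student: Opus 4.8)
The plan is to deduce the corollary from two results already in place: the general existence criterion for \eqref{eq-bdg} (the first Proposition of Section \ref{sec-bre}, which asserts that the problem admits a solution, necessarily unique, if and only if there is a finite-entropy competitor satisfying the constraints) and the explicit construction of Proposition \ref{res-05}, glued together by the contraction property \eqref{eq-03} of relative entropy under push-forward. First I would record that \eqref{eq-47} is exactly the instance of \eqref{eq-bdg} with $\XX=\Tn$, $\TT=\ii$ and $\mu_t=\vol$ for every $t$, and $R$ the reversible Brownian path measure. Consequently the existence criterion applies verbatim, and the statement reduces to the equivalence: there exists an admissible $Q\in\PO$ (meaning $Q_t=\vol$ for all $t$ and $Q_{01}=\pi$) with $H(Q|R)<\infty$ \emph{if and only if} $H(\pi|R_{01})<\infty$. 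The uniqueness clause of the corollary is then inherited directly from the uniqueness clause of the existence criterion.

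For the sufficiency direction $H(\pi|R_{01})<\infty\Rightarrow$ existence of a solution, I would simply exhibit the three-point bridge mixture of Proposition \ref{res-05}, namely
\[
Q=\int_{\XX^3} R(\cdot\mid X_0=x,X_{1/2}=z,X_1=y)\,\gamma(dxdzdy),\qquad \gamma(dxdzdy)=\pi(dxdy)\,\vol(dz).
\]
Proposition \ref{res-05} guarantees that $Q$ satisfies both constraints $[Q_t=\vol,\ \forall 0\le t\le 1]$ and $Q_{01}=\pi$, and that $H(Q|R)<\infty$ under the hypothesis $H(\pi|R_{01})<\infty$. Thus $Q$ is an admissible competitor of finite entropy, and the existence criterion produces the (unique) minimizer.

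For the necessity direction, suppose \eqref{eq-47} admits a solution. By the existence criterion this is equivalent to the existence of an admissible $Q\in\PO$ with $H(Q|R)<\infty$. Applying the push-forward contraction \eqref{eq-03} to the endpoint map $f=(X_0,X_1):\OO\to\XXX$, and using $f\pf Q=Q_{01}=\pi$ together with $f\pf R=R_{01}$, I obtain
\[
H(\pi|R_{01})=H(Q_{01}|R_{01})\le H(Q|R)<\infty ,
\]
which is the claimed condition. This closes the equivalence.

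Honestly, the corollary carries no genuine obstacle of its own: the two nontrivial ingredients—the abstract existence criterion and the finite-entropy estimate for the explicit bridge mixture $Q$—are already established, so the present statement is their conjunction with the entropy inequality \eqref{eq-03}. The only point meriting a moment's care is the finiteness of the entropy of a solution, and this is handled cleanly by the \emph{biconditional} form of the existence criterion: ``admits a solution'' is \emph{defined} to be equivalent to the existence of a finite-entropy admissible competitor, so in the necessity argument I may work with that competitor directly and never need to argue separately that the minimizer itself has finite relative entropy.
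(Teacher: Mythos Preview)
Your proof is correct and follows essentially the same route as the paper's. The only cosmetic differences are that the paper cites Corollary~\ref{res-03}-(1) (rather than the unnamed Proposition of Section~\ref{sec-bre}) for the existence/uniqueness step, and in the necessity direction works directly with the solution $P$ itself—writing $H(\pi|R_{01})=H((X_0,X_1)\pf P|(X_0,X_1)\pf R)\le H(P|R)<\infty$—rather than with an arbitrary finite-entropy competitor; both variants are equivalent here.
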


\begin{proof}[Proof of Corollary \ref{res-07}]
If $H( \pi| R _{ 01})< \infty,$ by Proposition \ref{res-05}  we have $\inf \eqref{eq-bdg}\le H(Q|R)< \infty$ and we conclude with Corollary \ref{res-03}-(1) that \eqref{eq-47} admits a unique solution.
Conversely, when \eqref{eq-47} admits a solution $P$, we have $H( \pi|R _{ 01})=H((X_0,X_1)\pf P|(X_0,X_1)\pf R)\le H(P|R)< \infty.$
\end{proof}

\begin{proof}[Proof of Proposition \ref{res-05}]

As $R$ is Markov, it satisfies
\begin{alignat*}{2}
R(\cdot\mid X_0=x,X _{ 1/2}&=z,X_1=y)\\=\  &R(X _{ [0,1/2]}\in\cdot\mid X_0=x,X _{ 1/2}=z)R(X _{ [1/2,1]}\in\cdot\mid X _{ 1/2}=z,X_1=y).
\end{alignat*}
Let us check that $Q$ satisfies the announced constraints.
\\
We have $Q _{ 01}= \pi$ since for any measurable subsets $A$ and $B$ of $\XX,$ 
\begin{eqnarray*}
Q _{ 01}(A\times B)&=&Q(X_0\in A, X_1\in B)\\
	&=& \int _{ \XX^3} R(X _{0}\in A\mid X_0=x,X _{ 1/2}=z)R(X _{ 1}\in B\mid X _{ 1/2}=z,X_1=y)\, \gamma(dxdzdy)\\
	&=& \int _{ \XX^3} \1 _{ x\in A}\1 _{ y\in B}\, \gamma(dxdzdy)= \gamma(A\times\XX\times B)=\vol(\XX) \pi(A\times B)\\
	&=&\pi (A\times B).
\end{eqnarray*}
Let us show that for all $0\le t\le 1,$ $Q_t=\vol.$ Take $0\le t\le 1/2$ and denote $R(X _{ [0,1/2]}\in\cdot\mid X_0=x, X _{ 1/2}=z)=\widetilde{R} ^{ x,z}(\cdot).$
 Since $ \pi(\cdot\times \XX)= \pi(\XX\times\cdot)=\vol,$ we have $ \gamma(dxdz\times \XX)=\pi(dx\times\XX)\vol(dz)=\vol(dx)\vol(dz).$ Hence, for any measurable bounded function $f$ on $\XX$, we have 
 \begin{eqnarray*}
\IX f\, dQ_t&=&\int _{ \XX^3}E _{ \widetilde{R} ^{ x,z}}[f(X_t)]\, \gamma(dxdzdy)
 	= \int _{ \XX^2} E _{\widetilde{R} ^{ x,z}}[f(X_t)]\, \vol(dx)\vol(dz)\\
	&\overset {\eqref{eq-48}}{=}& \int _{ \XX^2} E _{\widetilde{R} ^{ 0,z-x}}[f(X_t-x)]\, \vol(dx)\vol(dz)
	= \int _{ \XX^2} E _{\widetilde{R} ^{ 0,a}}[f(X_t-x)]\, \vol(dx)\vol(da)\\
	&=&  \int _{ \XX} E _{\widetilde{R} ^{ 0,a}}\Big[\IX f(X_t-x)\, \vol(dx)\Big]\,\vol(da)
	=  \int _{ \XX} E _{\widetilde{R} ^{ 0,a}}\Big[\IX f\, d\vol\Big]\,\vol(da)\\
	&=&\IX f\, d\vol,
 \end{eqnarray*}
 where the translation invariance of $\vol$  was used at the last but one equality.
This shows that $Q_t=\vol$ for all $0\le t\le 1/2$. A similar argument works for $1/2\le t\le 1.$

It remains to compute the entropy $H(Q|R)$ to obtain a criterion of existence of a solution. Let us denote $Q _{ 0,1/2,1}(dxdzdy):=Q(X_0\in dx,X _{ 1/2}\in dz, X_1\in dy)$ and $Q ^{ xzy}:=Q(\cdot\mid X_0=x,X _{ 1/2}=z,X_1=y).$   
We have
\begin{eqnarray*}
H(Q|R)&\overset{(i)}=&H(Q _{ 0,1/2,1}|R _{ 0,1/2,1})+\int _{ \XX^3}
H(Q ^{ xzy}|R ^{ xzy})Q _{ 0,1/2,1}(dxdzdy)\\
&\overset{(ii)}=&H( \gamma|R _{ 0,1/2,1})\\
&\overset{(iii)}=& H( \gamma _{ 01}|R _{ 01})+\IXX H(\gamma ^{ xy}|R ^{ xy} _{ 1/2})\, \pi(dxdy)\\
 &\overset{(iv)}=& H( \pi|R _{ 01})+\IXX H(\vol|R ^{ xy} _{ 1/2})\, \pi(dxdy).
\end{eqnarray*}
The factorization property of the entropy  is invoked at the equalities (i) and (iii). The identity  (ii) is a consequence of
$Q ^{ xzy}=R ^{ xzy},$ for $ \gamma=Q _{ 0,1/2,1}$-almost 
all $(x,z,y).$
The last equality (iv) follows from $ \gamma _{ 01}(dxdy):= \gamma(X\in dx, Y\in  dy)= \pi(dxdy)\vol(\XX)= \pi(dxdy)$ and $ \gamma ^{ xy}(dz):= \gamma(Z\in dz\mid X=x,Y=y)= \gamma(Z\in dz)=\vol(dz)$ since $(X,Y)$ and $Z$ are $ \gamma$-independent. It remains to show that
\begin{equation}\label{eq-49}
\sup _{ x,y\in\Tn}H(\vol|R ^{ xy}_{ 1/2})< \infty,
\end{equation}
to obtain that  $H(Q|R)$ is finite as soon as $H( \pi| R _{ 01})< \infty.$
By means of the formula
\begin{equation*}
 \frac{dR ^{ xy}_{ 1/2}}{d\vol}(z)= (2 /\pi) ^{ n/2} 
 	\frac{\sum _{ k,l\in \mathbb{Z}^n}\exp(- |z-x+k|^2-|y-z+l|^2 )}{\sum _{ k\in \mathbb{Z}^n}\exp(-|y-x+k|^2/2)},
\end{equation*}
we see that the function
\begin{alignat*}{1}
H(\vol|R ^{ xy}_{ 1/2})=\log \Big[ (\pi/2) ^{ n/2}&\sum _{ k\in \mathbb{Z}^n}\exp(-|y-x+k|^2/2)\Big] \\
&-\int _{ \Tn}\log \left(\sum _{ k,l\in \mathbb{Z}^n}\exp(- |z-x+k|^2-|y-z+l|^2 )\right) \,\vol(dz)
\end{alignat*}
is continuous in $(x,y).$ As $\Tn$ is compact, we have proved \eqref{eq-49}. This completes the proof of the proposition.
\end{proof}

\vskip 5mm
\bf Acknowledgements \rm

The second and the fourth authors were partially supported by the FCT Portuguese project PTDC/MAT-STA/0975/2014.


\end{document}